\newtheorem{thm}{Theorem}[section] 
\newtheorem{lem}[thm]{Lemma} 
\newtheorem{cor}[thm]{Corollary} 
\newtheorem{prop}[thm]{Proposition} 
\newtheorem{defn}{Definition} 
\theoremstyle{definition} 
\newtheorem{rem}[thm]{Remark} 
\theoremstyle{remark}
\def\O{\Omega}
\def\S{\Sigma} 
\def\n{\nabla}
\def\p{\partial}
\def\a{\alpha}
\def\b{\beta}
\def\n{\nabla}
\def\O{\Omega}
\def\p{\partial}
\def\a{\alpha}
\def\b{\beta}
\def\g{\gamma}
\def\d{\delta}
\def\k{\kappa}
\def\s{\sigma}
\def\n{\nabla}
\def\<{\langle}
\def\>{\rangle}
\def\div{{\rm div}}
\def\n{\nabla}
\def\O{\Omega}
\def\p{\partial}
\def\a{\alpha}
\def\b{\beta}
\def\g{\gamma}
\def\d{\delta}
\def\s{\sigma}
\def\Rn{\mathbb R^n}
\def\de{\partial}
\def\eps{\varepsilon}
\def\R{\mathbb{R}}
\DeclareMathOperator{\dive}{div}
\DeclareMathOperator{\supp}{supp}
\patchcmd{\abstract}{\scshape\abstractname}{\textbf{\abstractname}}{}{}
\def\@makefnmark{} 
\begin{document}
\title{Motion of level sets by inverse anisotropic mean curvature}
\author{Francesco Della Pietra}
\address{Universit\`a degli studi di Napoli Federico II, Dipartimento di Matematica e Applicazioni ``R. Caccioppoli'', Via Cintia, Monte S. Angelo - 80126 Napoli, Italia.}
 \email{f.dellapietra@unina.it}
\author{Nunzia Gavitone}
\address{Universit\`a degli studi di Napoli Federico II, Dipartimento di Matematica e Applicazioni ``R. Caccioppoli'', Via Cintia, Monte S. Angelo - 80126 Napoli, Italia.}
 \email{nunzia.gavitone@unina.it}
\author{Chao Xia}
\address{School of Mathematical Sciences, Xiamen University, 361005, Xiamen, China}
 \email{chaoxia@xmu.edu.cn}

\maketitle
\begin{abstract}
In this paper we consider the weak formulation of the inverse  anisotropic mean curvature flow, in the spirit of Huisken-Ilmanen \cite{HI1}. By using approximation method involving Finsler-$p$-Laplacian, we prove the existence and uniqueness of weak solutions.
\end{abstract}

{\bf MSC2010:} 53C44, 35D05

{\bf Keywords: }inverse mean curvature flow, anisotropic mean curvature, Finsler-$p$-Laplacian

\section{Introduction}

Let $F\in C^\infty(\mathbb{R}^n\setminus\{0\})$ be a Minkowski norm in $\mathbb{R}^{n}$, i.e.,
\begin{itemize}
\item[(i)] $F$ is a norm in $\mathbb{R}^{n}$, i.e.,  $F$ is a convex, even, $1$-homogeneous function satisfying $F(\xi)>0$ when $\xi\ne 0$;
\item[(ii)] $F$ satisfies a uniformly elliptic condition: $D^2 (\frac12 F^2)$ is positive definite in $\mathbb{R}^{n}\setminus \{0\}$.
\end{itemize} 

Let $X(\cdot,t)\colon M\times[0,T)\to \R^{n}$ be a family of smooth embeddings from a closed manifold $M$  in $\R^{n}$ satisfying the evolution equation
\begin{equation}
\label{iamcf}
\frac{\de}{\de t}X(x,t)=\frac{1}{H_{F}(x,t)}\nu_{F}(x,t),
\end{equation}
where $H_{F}(x,t)>0$ is the anisotropic mean curvature function of the hypersurface $N_{t}=X(M,t)$ and $\nu_{F}(x,t)$ is the unit anisotropic outer normal. 

When $F$ is the Euclidean norm, $\nu_F$ and $H_F$ reduce to the unit outer normal and the mean curvature respectively, and in turn \eqref{iamcf} reduces to the classical inverse mean curvature flow (IMCF). 
When $F$ is a general Minkowski norm,  \eqref{iamcf} is so-called inverse anisotropic mean curvature flow (IAMCF).

Gerhardt \cite{Ge} and Urbas \cite{Ur} proved that the classical IMCF which initiated from a star-shaped and strictly mean convex hypersurface exists for all time and converge to a round sphere after rescaling. For general initial data, the IMCF may develop singularity. Huisken-Ilmanen \cite{HI1} has developed a theory of weak solutions for the IMCF of hypersurfaces in Riemannian manifolds by its level-set formulation and applied it to show the validity of the Riemannian Penrose inequality.

For the anisotropic counterpart, recently the third author \cite{x} has studied the IAMCF which initiated from a star-shaped and strictly $F$-mean convex hypersurface and proved the  long time existence and convergence result analogous to Gerhardt and Urbas' result. The aim of this paper is to study Huisken-Ilmanen type weak solutions for the IAMCF by its level-set formulation.

Suppose the evolving hypersurfaces $N_{t}$ are given by level sets of a function $u\colon \R^{n}\to \R$, that is
\[
E_{t}= \{x\in \R^{n}: u(x)<t\}, \quad N_t=\p E_t.
\] 
If $u$ is smooth and $\nabla u\ne 0$, then \eqref{iamcf} is equivalent to the degenerate elliptic equation
\begin{equation}
\label{prob1}
\dive\left(F_{\xi}(\nabla u) \right) = F(\nabla u).
\end{equation}
See Section 2.
When $F$ is Euclidean, it is clear \eqref{prob1} reduces to
\begin{equation}
\label{prob2}
\dive\left(\frac{\nabla u}{|\nabla u|}\right) = |\nabla u|.
\end{equation}

As Huisken-Ilmanen \cite{HI1}, we define a weak solution of \eqref{prob1} by the following minimization principle.
\begin{defn}
\label{defweak}
Let $\O\subset \R^n$ be an open set.
A function $u\in C^{0,1}_{\textrm{loc}}(\Omega)$ is called a weak solution 
to \eqref{prob1} if 
\begin{equation}
\label{mineq}
J_{F,u}(u)\le J_{F,u}(\varphi)
\end{equation}
for every precompact set $K\subset \Omega$ and for every test function $\varphi\in C^{0,1}_{\textrm{loc}}(\Omega)$ 
 with $\varphi=u$ in $\Omega\setminus K$, and where 
\begin{equation}\label{JFu}
J_{F,u}(\varphi):=\int_{K} \big[F(\nabla \varphi)+\varphi F(\nabla u)\big]dx.
\end{equation}

Moreover, $u$ is a proper solution if in addition
\[
\lim_{|x|\to +\infty} u(x)=+\infty.
\]
\end{defn}

Our main result of this paper is the following existence result. 

\begin{thm}
\label{thm-existence}
Let $\O\subset \R^n$ be an open set with smooth boundary such that $\O^c=\R^n\setminus \O$ is bounded.
There exists a unique proper weak solution $u\in C^{0,1}_{\textrm{loc}}(\overline \Omega)$ of \eqref{prob1}, in the sense of Definition \ref{defweak}, such that $u=0$ on $\de \O$. Moreover, $u$ satisfies
\begin{align}
\label{boundIMCF}
F(\nabla u(x)) \le \sup_{\de \Omega}H^+_{F}, \quad  x \in \overline\Omega,\\
F(\nabla u(x)) \le H^{+}_{F}(x),\quad  x \in \de\Omega,
\end{align}
where $H_F^+(x)=\max\{H_F(x),0\}$ and $H_F$ is the anisotropic mean curvature of $\de \Omega$.
\end{thm}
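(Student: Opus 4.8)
The plan is to run the elliptic-regularization scheme of Huisken--Ilmanen, replacing the regularized mean-curvature operator by the Finsler $p$-Laplacian $\dive\big(F^{p-1}(\nabla\,\cdot\,)F_\xi(\nabla\,\cdot\,)\big)$ and the limit $\varepsilon\to0^+$ by $p\to1^+$.

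\emph{The $p$-problem.} For $1<p<n$ I would first solve the exterior Finsler $p$-Laplace equation $\dive\big(F^{p-1}(\nabla v_p)F_\xi(\nabla v_p)\big)=0$ in $\O$ with $v_p=1$ on $\de\O$ and $v_p\to0$ at infinity --- rigorously on the truncations $\O\cap B_L$ (with $v_p=0$ on $\de B_L$) followed by $L\to\infty$; existence is classical by the direct method, $0<v_p<1$ by comparison, and $v_p>0$ in $\O$ by Harnack. Using only the $1$-homogeneity of $F$ (Euler's identity $\xi\cdot F_\xi(\xi)=F(\xi)$), the evenness of $F$, and the $0$-homogeneity of $F_\xi$, a direct computation shows that
\[
u_p:=-(p-1)\log v_p \quad\text{solves}\quad \dive\big(F^{p-1}(\nabla u_p)F_\xi(\nabla u_p)\big)=F^p(\nabla u_p) \ \text{ in }\O ,
\]
with $u_p=0$ on $\de\O$ and $u_p\to+\infty$ at infinity; this is the natural $p$-version of \eqref{prob1} since $F^{p-1}F_\xi\to F_\xi$ and $F^p\to F$ as $p\to1^+$. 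Equivalently, for every precompact $K$, $u_p$ minimizes $\varphi\mapsto\int_K\big[\tfrac1pF^p(\nabla\varphi)+\varphi F^p(\nabla u_p)\big]dx$ among competitors agreeing with $u_p$ outside $K$ (critical point $=$ minimizer, by strict convexity of $F^p$ under hypothesis (ii)).

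\emph{Uniform gradient estimates} --- the main obstacle. I would prove $F(\nabla u_p)\le\sup_{\de\O}H_F^+$ on $\overline\O$ and $F(\nabla u_p)\le H_F^+$ on $\de\O$, uniformly in $p$, in two steps. (a) \emph{Boundary estimate}: construct local upper/lower barriers near $\de\O$ from the level sets of the anisotropic distance to $\de\O$; the barrier ODE involves precisely the anisotropic mean curvature of $\de\O$, so comparison gives the claimed bound on $\de\O$ (up to an error vanishing as $p\to1^+$, or cleanly if one is careful with the barrier construction). (b) \emph{Global estimate}: apply the maximum principle to a regularization of $\tfrac12F^2(\nabla u_p)$, checking that differentiating the source $F^p(\nabla u_p)$ produces a zeroth-order term of favourable sign; the degeneracy set $\{\nabla u_p=0\}$ cannot carry the maximum (as in Huisken--Ilmanen), and the maximum at infinity is excluded using explicit radial sub/supersolutions built from $F$-balls, which moreover make properness uniform in $p$. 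Combining (a) and (b) gives local uniform $C^{0,1}$ bounds on $u_p$.

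\emph{Limit and uniqueness.} Along a subsequence $p\to1^+$, $u_p\to u\in C^{0,1}_{\mathrm{loc}}(\overline\O)$ locally uniformly, with $u=0$ on $\de\O$, $u\to+\infty$ at infinity (properness passes to the limit via the radial barriers), and \eqref{boundIMCF} for $u$ by lower semicontinuity. To identify $u$ as a weak solution in the sense of Definition \ref{defweak}, I would pass to the limit in the minimization for $u_p$: given a competitor $\varphi$ of $u$ on $K$, build competitors $\varphi_p$ of $u_p$ on a slightly larger $K'$ by interpolating between $\varphi$ and $u_p$ in the collar $K'\setminus K$ (the error there vanishes with $|K'\setminus K|$ by local uniform convergence and the gradient bounds), compare energies, and let $p\to1^+$; convexity and lower semicontinuity of $\int F(\nabla\,\cdot\,)$, the elementary bound $\tfrac1pt^p\le\tfrac1p+\tfrac{p-1}{p}t^p$ giving $\int_{K'}\tfrac1pF^p(\nabla\varphi_p)\to\int_{K'}F(\nabla\varphi)$, and the strong $L^1_{\mathrm{loc}}$ convergence of $\nabla u_p$ --- \emph{extracted} from the same comparison via strict convexity of $\xi\mapsto\tfrac12F^2(\xi)$, as in Huisken--Ilmanen --- together yield $J_{F,u}(u)\le J_{F,u}(\varphi)$. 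Finally, uniqueness follows from a comparison principle for weak solutions: if $u_1,u_2$ are proper weak solutions with $u_1\le u_2$ near $\de\O$, then testing the two minimization inequalities on $\{u_1>u_2\}$ against $\min(u_1,u_2)$ and $\max(u_1,u_2)$ and using strict convexity of $\tfrac12F^2$ forces $\{u_1>u_2\}=\emptyset$; since both candidates vanish on $\de\O$ and are proper, $u_1=u_2$.
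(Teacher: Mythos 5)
Your proposal is correct and follows essentially the same route as the paper: approximation by the exterior Finsler $p$-Laplace problem with $u_p=(1-p)\log v_p$, a boundary gradient estimate via barriers built from a function whose anisotropic normal derivative approximates $H_F^+$ on $\de\O$, a global bound by applying the maximum principle to $\tfrac12F^2(\nabla u_p)$ (the paper makes the "favourable sign" step precise via a Kato-type inequality and works on the set where $F(\nabla u_p)>\beta$ to avoid the degeneracy), and passage to the limit $p\to1^+$ through the minimization formulation with strong $L^1_{\mathrm{loc}}$ convergence of $F(\nabla u_{p_k})^{p_k}$ extracted from cut-off comparisons. The uniqueness argument via the Huisken--Ilmanen comparison principle is also the one the paper invokes.
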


Huisken-Ilmanen's approach in the classical IMCF case to prove the existence is  studying an  approximate equation of \eqref{prob2}, known as elliptic regularization. One of the key feature of this elliptic regularization is that it corresponds to a family of translating graphs which solves the IMCF in $\R^{n}\times \R$.  
It seems such elliptic regularization is not available in the anisotropic case.
Later, Moser \cite{m} found another  approximate equation of \eqref{prob2} involving the $p$-Laplacian. It turns out that this approximate equation is also effective to prove the existence of weak solutions for IAMCF.

Inspired by Moser's approach, we consider the approximate equation of \eqref{prob1} involving the Finsler-$p$-Laplacian, that is,
\begin{equation}
\label{p_prob}
\left\{
\begin{array}{ll}
\dive\left( F^{p-1}(\nabla u)F_{\xi}(\nabla u) \right) = F(\nabla u)^{p} & \text{in }\Omega, \\[.2cm]
u=0 &\text{in }\Omega^{c},\\
u\to \infty&\text{as }x\to \infty.
\end{array}
\right.
\end{equation}
We have the following 
\begin{thm} \label{theoremapprox}Let $\O\subset \R^n$ be an open set with smooth boundary such that $\O^c=\R^n\setminus \O$ is bounded.
For every $p>1$, there exists a unique solution $u_p\in C^{1,\alpha}_{loc}(\overline{\O})$. Moreover,
for every $\eps >0$, there exists $p_0=p_0(\eps)>1$ such that  if $u_p\in C^{1,\alpha}_{loc}(\overline{\O})$ is  the solution to \eqref{p_prob} for   $1<p\le p_0$, then 
\begin{align}
&F(\nabla u_p(x)) \le \sup_{\de \Omega}H^+_{F}+\eps, \quad  x \in \overline\Omega\label{estcurv1}\\
&F(\nabla u_{p}(x)) \le H^+_{F}(x)+\eps, \quad x \in \de \Omega.\label{estcurv2}
\end{align}
\end{thm}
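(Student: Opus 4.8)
The plan is to break the statement into three parts: existence, uniqueness, and the gradient estimates, treating existence and uniqueness by now-standard variational/comparison arguments for the Finsler $p$-Laplacian, and concentrating the real work on the curvature bounds \eqref{estcurv1}--\eqref{estcurv2}.

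For existence of $u_p$, I would fix a large ball $B_R\supset \O^c$ and solve the Dirichlet problem for the Finsler-$p$-Laplacian type equation in the annular domain $\O\cap B_R$ with boundary data $u=0$ on $\p\O$ and $u=c_R$ on $\p B_R$, using the direct method of the calculus of variations on the functional $\varphi\mapsto \int \big(\tfrac1p F(\nabla\varphi)^p + F(\nabla u)^p\,\varphi\big)$; wait — the equation is not in divergence-of-a-gradient-of-a-functional form because of the right-hand side $F(\nabla u)^p$, so instead I would set up a fixed point / monotone iteration, or more simply observe that \eqref{p_prob} is the Euler--Lagrange equation of $\varphi\mapsto \int \tfrac1p F(\nabla\varphi)^p\, e^{?}$ after an exponential change of variable, as in Moser \cite{m}: writing $v=e^{u/(n-1)}$ (or the appropriate anisotropic exponent) turns \eqref{p_prob} into a homogeneous Finsler-$p$-Laplace equation $\dive(F^{p-1}(\nabla v)F_\xi(\nabla v))=0$, for which existence, $C^{1,\alpha}_{loc}$ regularity, and the comparison principle are classical. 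Then one recovers $u_p$ and lets $R\to\infty$, using a barrier at infinity (e.g.\ a radial $F^\circ$-cone-type function) to get the properness $u_p\to\infty$ together with local uniform bounds so the limit solves \eqref{p_prob} on all of $\O$. Uniqueness likewise follows from the comparison principle applied to the transformed homogeneous equation, together with the boundary condition and the properness at infinity.

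For the gradient estimates, the key point is that for each fixed $p$ the function $F(\nabla u_p)$ satisfies a good differential inequality. I would first derive, on the set where $\nabla u_p\neq 0$ (by elliptic regularity $u_p$ is smooth there), a Bochner-type identity for the Finsler-$p$-Laplacian, or more directly differentiate the equation \eqref{p_prob} and test cleverly, to show that $w:=F(\nabla u_p)$ cannot attain an interior maximum exceeding a quantity that degenerates as $p\to1$ — concretely, that any interior maximum of $w$ forces $w$ to be controlled by its boundary values. Equivalently, I would run the maximum principle on the quantity $\Phi=F(\nabla u_p)^2$ or on a suitable power, using that the linearized operator is elliptic away from critical points and that the zeroth-order terms have a favourable sign for $p$ near $1$. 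This reduces \eqref{estcurv1} to the boundary estimate \eqref{estcurv2}.

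The boundary estimate \eqref{estcurv2} is where the anisotropic mean curvature of $\p\O$ enters, and I expect this to be the main obstacle. The idea is to construct, near each boundary point, a barrier built from the signed $F^\circ$-distance (Wulff-shape distance) to $\p\O$: a function of the form $\psi=g(d_F(x))$ where $d_F$ is the anisotropic distance to $\p\O$ and $g$ is an ODE profile chosen so that $\psi$ is a supersolution of \eqref{p_prob} in a collar neighbourhood of $\p\O$ and $\psi\ge u_p$ on the parabolic-type boundary of that collar. The Hessian of the anisotropic distance function involves precisely the anisotropic second fundamental form of $\p\O$, so plugging $\psi$ into the Finsler-$p$-Laplace operator produces $H_F$ on $\p\O$ at leading order, plus error terms that are $O(p-1)$ and $O(d_F)$; choosing the collar width and the profile $g$ depending on $\eps$ absorbs these errors into the $+\eps$. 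Comparing $u_p\le\psi$ on the collar via the comparison principle, and reading off $|\nabla u_p|$ at $\p\O$ from $|\nabla\psi|=g'(0)\,F^\circ(\text{co-normal})^{-1}$-type data, yields \eqref{estcurv2} with the claimed $\eps$ and hence, through the first step, the threshold $p_0(\eps)$ in \eqref{estcurv1}. The technical care needed in (a) justifying the distance function's regularity near $\p\O$ and its Hessian formula in the Finsler setting, and (b) tracking the $p$-dependence of all constants uniformly as $p\to1^+$, is what makes this the hard part; everything else is a careful but routine adaptation of Moser's Euclidean argument to the Finsler-$p$-Laplacian.
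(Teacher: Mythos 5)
Your skeleton coincides with the paper's (substitution $u_p=(1-p)\log v_p$ reducing \eqref{p_prob} to the homogeneous problem \eqref{p_change} for existence and uniqueness; an interior maximum principle for $F(\nabla u_p)$ reducing \eqref{estcurv1} to \eqref{estcurv2}; a boundary barrier vanishing on $\partial\Omega$ whose gradient there realizes $H_F^++\varepsilon$), but two of your steps have genuine gaps. For the interior step, the paper's Lemma \ref{sperb} gives $\sup_{\overline\Omega}F(\nabla u_p)=\sup_{\partial\Omega}F(\nabla u_p)$ for \emph{every} $1<p<n$, and the decisive point is not a ``favourable sign of the zeroth-order terms for $p$ near $1$'': after differentiating the equation with $G=\tfrac12F^2$, one is left with the term $-\tfrac12 G_jG_kG_{il}u_{lk}u_{ij}+G\,G_{kj}G_{il}u_{lk}u_{ij}$, whose nonnegativity is an anisotropic Kato-type inequality (Wang--Xia). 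In the Euclidean case this is Cauchy--Schwarz; in the Finsler case it is a nontrivial lemma your sketch does not supply. Moreover $\Omega$ is an exterior domain, so before any maximum principle can be applied to $F(\nabla u_p)$ you must exclude that its supremum escapes to infinity; the paper proves the decay \eqref{limgrad} (via Caccioppoli, Harnack and DiBenedetto's gradient estimate) precisely so that the superlevel sets $\{F(\nabla u_p)>\beta\}$ are bounded. Your proposal addresses neither point.

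For the boundary step, your barrier $\psi=g(d_F)$ must dominate $u_p$ on the inner edge of the collar, and the $O(p-1)$ error in $\mathcal Q_p[\psi]$ is controlled by the $C^2$-norm of $\psi$ on the comparison region; to choose the profile, the collar width, and then $p_0$ depending only on $\varepsilon$, you need an upper bound for $u_p$ on the collar that is \emph{uniform in $p$}, otherwise the selection of $p_0(\varepsilon)$ is circular. The paper gets this from the explicit Wulff-shape barriers in \eqref{th:confr}: the interior Wulff-ball condition yields $F(\nabla u_p)\le (n-p)/R$ on $\partial\Omega$, hence on all of $\overline\Omega$ by the interior lemma, hence $u_p\le C(\delta)$ on the collar uniformly for $p$ near $1$. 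This preliminary crude, $p$-uniform global gradient bound is the missing hinge of your argument; once it is in place, your distance-function barrier (or the paper's choice $w=\bar w/(1-\bar w/\delta)$ with $H_F^+<F(\nabla\bar w)\le H_F^++\varepsilon$ on $\partial\Omega$) closes the proof by the comparison principle exactly as you describe.
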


Theorem \ref{thm-existence} follows from Theorem \ref{theoremapprox} by approximation. 

The rest of this paper is organized as follows. In Section 2, we recall some fundamentals on anisotropic functions and anisotropic mean curvature. In Section 3, we study Huisken-Ilmanen type weak formulation of IAMCF and its properties. In Section 4, we study the approximate equation involving the Finsler-$p$-Laplacian and  show the gradient estimate and the existence of weak solution of IAMCF.

\

\section{Notation and preliminaries}

\subsection{Minkowski norm and Wulff shape}\

Let $F$ be a Minkowski norm on $\R^n$.
The polar function $F^o\colon \Rn \rightarrow [0,+\infty[$ 
of $F$, defined as
\begin{equation*}
F^o(x)=\sup_{\xi\ne 0} \frac{\langle \xi, x\rangle}{F(\xi)},
\end{equation*}
is again a Minkowski norm on $\R^n$.
Furthermore, 
\begin{equation*}
F(\xi)=\sup_{x\ne 0} \frac{\langle \xi, x\rangle}{F^o(x)}.
\end{equation*}
Denote 
\[
\mathcal W = \{  x\in  \Rn \colon F^o(x)< 1 \}. 
\]
This is the so-called Wulff shape centered at the origin. 
More generally, we denote by $\mathcal W_r(x_0)$
the set $r\mathcal W+x_0$, that is the Wulff shape centered at $x_0$
with radius $r$ and  $\mathcal W_r=\mathcal W_r(0)$.


The following properties of $F$ and $F^o$ hold true:
for any  $x, \xi \in \Rn\setminus \{0\}$,
\begin{gather*}
\label{prima}
 \langle F_\xi(\xi) , \xi \rangle= F(\xi), \quad  \langle F_x^{o} (x), x \rangle
= F^{o}(x)
 \\
 \label{seconda} F(  F_x^o(x))=F^o(  F_\xi(\xi))=1,\\
\label{terza} 
F^o(x)   F_\xi( F_x^o(x) ) =x, \quad F(\xi) 
 F_x^o\left(  F_\xi(\xi) \right) = \xi.\end{gather*}
See e.g. \cite{X2}, Chapter 2.

\subsection{Anisotropic mean curvature and anisotropic area functional}\

Let $N$ be a smooth closed hypersurface in $\Rn$ and $\nu$ be the unit Euclidean outer normal of $N$.
The anisotropic outer normal of $N$ is defined by \[
\nu_F=F_\xi(\nu).\]
The anisotropic mean curvature of $N$ is defined by 
\[H_F=\dive(\nu_F).
\]
Here $\dive$ is the divergence operator with respect to the Euclidean metric.
In this paper we are interested in the case  when $N$ is given by a level set of a smooth function $u$, namely, \[
N=N_{t}=\de E_t,\hbox{ where }E_t=\{x\in \Rn: u(x)<t\}.\]
When $\nabla u\neq 0$, it is clear that $\nu=\frac{\nabla u}{|\nabla u|}$ and \begin{eqnarray}\label{levelset}
\nu_F= F_\xi(\nabla u), \quad H_F= \dive(F_\xi(\nabla u)).
\end{eqnarray}
If $N_t$ satisfies the IAMCF, we see that $u(x(t))=t$ and by taking derivative about $t$,
 we get \[\left\<\nabla u, \frac{1}{H_F}\nu_F\right\>=1.\] By virtue of \eqref{levelset}, we arrive at \eqref{prob1}.
 
 The anisotropic area functional of $N$ is defined as
\[
	\sigma_{F}(N)=\int_{N} F(\nu)\,d \mathcal{H}^{n-1},
\]
where $\mathcal{H}^{n-1}$ is the $(n-1)$-dimensional Hausdorff measure.

It is well-known that a variational characterization of $H_F$ is given by the first variational formula of $\sigma_{F}$,  see for instance \cite{Re2, bpr, x}.
More precisely, we have
\begin{prop}[Reilly \cite{Re2}, Bellettini-Novaga-Riey \cite{bpr}]\label{HL}\

Let $N$ be a smooth closed hypersurface given by an embedding $X_0: M\to \Rn$. Let $N_s$ be a variation of $N$ given by $X(\cdot,s): M\to \Rn$, $s\in (- \eps, \eps),$ whose variational vector field $\frac{\p}{\p s}|_{s=0}X(\cdot,s) =V$. Then 
\begin{eqnarray}\label{var0}
\left.\frac{d}{d s}\right |_{s=0} \sigma_{F}(N_{s})=
\int_{N}  \dive_{F,N}(V) F(\nu) d \mathcal{H}^{n-1}= 
\int_N H_F(X_0) \left\<V, \nu\right\> d \mathcal{H}^{n-1},
\end{eqnarray}
where 
\[
\dive_{F,N}(V) 
:= {\rm div} V- \left\<\nabla_{\nu_F} V, \frac{\nu}{F(\nu)}\right\>.
\]
\end{prop}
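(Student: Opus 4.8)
The plan is to differentiate the integrand $F(\nu)\,d\mathcal H^{n-1}$ pointwise on $M$, identify the result with $\dive_{F,N}(V)\,F(\nu)\,d\mathcal H^{n-1}$, and then deduce the second equality from two applications of the divergence theorem on the closed hypersurface $N$. First I would fix local coordinates $\{x^i\}$ on $M$ and write $g_{ij}(s)=\langle\partial_i X(\cdot,s),\partial_j X(\cdot,s)\rangle$, so that $d\mathcal H^{n-1}=\sqrt{\det g(s)}\,dx$. Since $\partial_s\partial_i X|_{s=0}=\partial_i V$, one has $\partial_s g_{ij}|_{s=0}=\langle\partial_i V,\partial_j X\rangle+\langle\partial_i X,\partial_j V\rangle$, hence the classical first variation of the area element
\[
\partial_s\sqrt{\det g}\,\big|_{s=0}=(\dive_N V)\sqrt{\det g},\qquad \dive_N V:=g^{ij}\langle\partial_i V,\partial_j X\rangle=\sum_{\alpha=1}^{n-1}\langle\nabla_{e_\alpha}V,e_\alpha\rangle,
\]
where $\{e_\alpha\}_{\alpha=1}^{n-1}$ is a local orthonormal tangent frame of $N$ and $\nabla$ is the flat connection of $\R^n$. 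Differentiating the defining relations $\langle\nu(s),\partial_i X(\cdot,s)\rangle=0$ and $|\nu(s)|=1$ shows that $\partial_s\nu|_{s=0}$ is tangential with $\langle\partial_s\nu,\partial_i X\rangle=-\langle\nu,\partial_i V\rangle$, i.e. $\partial_s\nu|_{s=0}=-\sum_\alpha\langle\nabla_{e_\alpha}V,\nu\rangle e_\alpha$. Using $1$-homogeneity of $F$ (so $\nabla F(\nu)=F_\xi(\nu)=\nu_F$) and the decomposition $\nu_F=\nu_F^\top+\langle\nu_F,\nu\rangle\nu=\nu_F^\top+F(\nu)\nu$, I get $\partial_s F(\nu)|_{s=0}=\langle\nu_F,\partial_s\nu\rangle=-\langle\nabla_{\nu_F^\top}V,\nu\rangle$, and hence by the product rule
\[
\partial_s\big[F(\nu)\,d\mathcal H^{n-1}\big]\big|_{s=0}=\Big[F(\nu)\,\dive_N V-\langle\nabla_{\nu_F^\top}V,\nu\rangle\Big]\,d\mathcal H^{n-1}.
\]

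To recognize the bracket as $F(\nu)\,\dive_{F,N}(V)$, I would substitute $\nu_F=\nu_F^\top+F(\nu)\nu$ into the definition $\dive_{F,N}(V)={\rm div}\,V-\langle\nabla_{\nu_F}V,\tfrac{\nu}{F(\nu)}\rangle$ and write ${\rm div}\,V=\dive_N V+\langle\nabla_\nu V,\nu\rangle$; the two terms $\langle\nabla_\nu V,\nu\rangle$ cancel, giving $\dive_{F,N}(V)=\dive_N V-\tfrac1{F(\nu)}\langle\nabla_{\nu_F^\top}V,\nu\rangle$. In particular this expression involves only $V|_N$, so the ambient extension implicit in ${\rm div}\,V$ and $\nabla_{\nu_F}V$ plays no role. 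Comparing with the previous display, $\partial_s[F(\nu)\,d\mathcal H^{n-1}]|_{s=0}=\dive_{F,N}(V)F(\nu)\,d\mathcal H^{n-1}$, and integrating over $M$ (differentiation under the integral is legitimate since everything is smooth and $M$ compact) yields the first equality of \eqref{var0}.

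For the second equality I would split $V=V^\top+\phi\nu$ with $\phi=\langle V,\nu\rangle$. Writing $\nabla_{e_\alpha}\nu=A(e_\alpha)$ for the Weingarten map and $h(X,Y)=\langle A(X),Y\rangle$ for the second fundamental form, one computes $\dive_{F,N}(V^\top)F(\nu)=F(\nu)\dive_N V^\top+h(V^\top,\nu_F^\top)$ and $\dive_{F,N}(\phi\nu)F(\nu)=\phi\,HF(\nu)-\nu_F^\top(\phi)$, where $H=\tr A$ is the Euclidean mean curvature. The key pointwise identity is $\nabla_N\!\big(F(\nu)\big)=A(\nu_F^\top)$ (with $\nabla_N$ the tangential gradient), which follows from $\langle\nabla_{e_\alpha}(F(\nu)),e_\beta\rangle=\langle\nu_F,\nabla_{e_\alpha}\nu\rangle=h(\nu_F^\top,e_\alpha)$ and symmetry of $h$. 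Using it with the divergence theorem on the closed manifold $N$, the tangential contribution vanishes, $\int_N\dive_{F,N}(V^\top)F(\nu)\,d\mathcal H^{n-1}=\int_N\big[F(\nu)\dive_N V^\top+\langle A(\nu_F^\top),V^\top\rangle\big]\,d\mathcal H^{n-1}=0$, and $\int_N\nu_F^\top(\phi)\,d\mathcal H^{n-1}=-\int_N\phi\,\dive_N(\nu_F^\top)\,d\mathcal H^{n-1}$, so that $\int_N\dive_{F,N}(V)F(\nu)\,d\mathcal H^{n-1}=\int_N\phi\,\big[HF(\nu)+\dive_N(\nu_F^\top)\big]\,d\mathcal H^{n-1}$. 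Finally, since $F_\xi$ is $0$-homogeneous one has $D^2F(\nu)\,\nu=0$, so $H_F={\rm div}(\nu_F)=\dive_N(\nu_F)$ is intrinsic, and $\dive_N(\nu_F)=\dive_N(\nu_F^\top)+\dive_N\!\big(F(\nu)\nu\big)=\dive_N(\nu_F^\top)+F(\nu)H$ (again using $\langle\nabla_N(F(\nu)),\nu\rangle=0$ and $\dive_N\nu=H$). Hence $HF(\nu)+\dive_N(\nu_F^\top)=H_F$, which gives the second equality.

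Since the whole argument is smooth, local, and on a compact manifold, there is no analytic obstacle. The only points that require care are (i) verifying that $\dive_{F,N}$ as written is genuinely a tangential operator, i.e. extension-independent — handled by the cancellation in the second paragraph — and (ii) the two integrations by parts in the last step, whose entire content reduces to the single geometric identity $\nabla_N(F(\nu))=A(\nu_F^\top)$ together with the intrinsic description $H_F=\dive_N(\nu_F)$. Assembling these correctly is the main thing to get right; the rest is the standard first-variation-of-area computation.
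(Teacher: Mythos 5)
Your proof is correct. For the first equality it follows essentially the same route as the paper: the standard variation formulas $\partial_s\nu|_{s=0}=-\sum_\alpha\langle\nabla_{e_\alpha}V,\nu\rangle e_\alpha$ and $\partial_s\,d\mathcal H^{n-1}|_{s=0}=(\dive_N V)\,d\mathcal H^{n-1}$, combined with the decomposition $\nu_F=F(\nu)\nu+\langle F_\xi(\nu),e_\alpha\rangle e_\alpha$ to recognize the integrand as $\dive_{F,N}(V)\,F(\nu)$; your added observation that $\dive_{F,N}$ reduces to $\dive_N V-\tfrac{1}{F(\nu)}\langle\nabla_{\nu_F^{\top}}V,\nu\rangle$ and is therefore extension-independent is a worthwhile point the paper leaves implicit. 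Where you genuinely diverge is the second equality: the paper does not prove it at all (it cites \cite{x}), whereas you supply a self-contained argument via the splitting $V=V^{\top}+\phi\nu$, the identity $\nabla_N(F(\nu))=A(\nu_F^{\top})$, two integrations by parts on the closed hypersurface, and the intrinsic formula $H_F=\dive_N(\nu_F^{\top})+F(\nu)H$ (valid since $F_{\xi\xi}(\nu)\nu=0$). All of these steps check out, so your write-up is actually more complete than the paper's; the only cost is length, and the only care needed is exactly where you flag it, namely that the two boundary terms vanish because $F(\nu)V^{\top}$ and $\phi\,\nu_F^{\top}$ are tangential fields on a closed manifold.
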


\begin{proof} We refer to \cite{x} for the proof of the second equality.  For completeness, we prove the first equality here. We denote $\nu_s$ and $d\sigma_s$ be the unit outer normal and the area element of $N_s$ respectively.
It is well-known that 
\[
\frac{d}{ds}\Big|_{s=0}\nu_s=-\<\nu, \nabla_{e_i}V\>e_i, 
\]
where $\{e_i\}$ is an orthonormal basis of $TN$.
\[
\frac{d}{ds}\Big|_{s=0}d\sigma_s=\div_N(V) d\sigma.
\]
Thus
\begin{align*}
\frac{d}{ds}\Big|_{s=0} F(\nu_s) d\sigma_s&=\<F_\xi(\nu), -\<\nu, \nabla_{e_i}V\>e_i\>d\sigma+ F(\nu)\dive_N(V) d\sigma
\\&=\left[-\<\nu, \nabla_{e_i}V\>\<F_\xi(\nu), e_i\>+(\dive V- \<\nabla_\nu V, \nu\>)F(\nu)\right]  d\sigma
\\&= \left[\div V -\left\<\nabla_{\nu_F} V, \frac{\nu}{F(\nu)}\right\> \right]F(\nu)d\sigma
\end{align*}
The last line follows from \[\nu_F=F_{\xi}(\nu)=F(\nu)\nu+\<F_\xi(\nu), e_i\>e_i.\]
\end{proof}

\

\section{IAMCF: a variational formulation}

In this section, we review the weak formulation of IAMCF developed by Huisken-Ilmanen \cite{HI1} by using a minimizing principle. We follow closely Huisken-Ilmanen's strategy in \cite{HI1}, Section 1.

\subsection{Weak formulation of IAMCF} 
Recall (Definition \ref{defweak}) that $u$ is called a weak solution (subsolution, supersolution resp.) of \eqref{prob1} in $\O$ if $u\in C^{0,1}_{\textrm{loc}}(\Omega)$ and 
$J_{F,u}(u)\le J_{F,u}(\varphi)$
for every precompact set $K\subset \Omega$ and for every test function $\varphi\in C^{0,1}_{\textrm{loc}}(\Omega)$ ($\varphi\le u$, $\varphi \ge u$ resp.) with $\varphi=u$ in $\Omega\setminus K$, where $J_{F,u}$ is defined in \eqref{JFu}.

The fact that
\[
J_{F,u}(\min\{\varphi,u\})+J_{F,u}(\max\{\varphi,u\})=J_{F,u}(\varphi)+J_{F,u}(u)
\]
whenever $\{u\neq \varphi\}$ is precompact implies $u$ is a solution if and only if it is both a weak supersolution and a weak subsolution.

There is an equivalent weak formulation by set functional. For $K\subseteq \Omega$ and $u\in C^{0,1}_{\textrm{loc}}(\Omega)$, define
\begin{equation*}
J_{F, u}(G)=J_{F,u}^{K}(G):=\int_{\de^{*}G\cap K} F(\nu)d\mathcal{H}^{n-1}-\int_{G\cap K}F(\nabla u)dx,
\end{equation*}
for a set $G$ of locally finite perimeter, and $\de^{*}G$ denotes the reduced boundary of $G$. 
\begin{defn}
\label{setdef}
We say that $E$ minimizes $J_{F,u}$ in a set $A$  (minimizes on the outside, minimizes on the inside, resp.) if 
\begin{equation*}
J_{F,u}(E)\le J_{F,u}(G)
\end{equation*}
for any $G$  such that $E\Delta G\subset\subset A$ ($G\supseteq E$, $G\subseteq E$ resp.) and any compact set $K$ containing $E\Delta G$. 
Here $E\Delta G=(E\setminus G)\cup (G\setminus E)$.
\end{defn}
The fact that
\[
J_{F,u}(E\cap G)+J_{F,u}(E\cup G)\le J_{F,u}(E)+J_{F,u}(G)
\]
whenever $E\Delta G$ is precompact guarantees that $E$ minimizes $J_{F,u}$ in $\O$ if and only if $E$ is minimizes $J_{F,u}$ both on the inside and on the outside in $\O$.

The Definitions \ref{defweak} and \ref{setdef} are equivalent in the following sense.
\begin{prop}
\label{equiv}
Let $\Omega$ be an open set and $u\in C^{0,1}_{\textrm{loc}}(\Omega)$, then $u$ is a weak solution of \eqref{prob1} in $\Omega$ if and only if for each $t$,
$E_{t}=\{x\in\Omega \colon u<t\}$
minimizes $J_{F,u}$ in $\Omega$.
\end{prop}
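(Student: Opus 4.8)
The plan is to follow the classical Huisken--Ilmanen argument (their Lemma 1.1) adapted to the anisotropic functional, establishing the two implications separately via the ``both sides'' decomposition. The key bookkeeping device is the coarea-type identity relating the functional $J_{F,u}$ evaluated on a Lipschitz competitor to the set functional $J_{F,u}$ evaluated on its sublevel sets: for a Lipschitz function $v$ on a precompact set $K$,
\[
\int_K F(\nabla v)\,dx=\int_{-\infty}^{+\infty}\Big(\int_{\partial^*\{v<t\}\cap K}F(\nu)\,d\mathcal H^{n-1}\Big)\,dt,
\]
which is just the anisotropic coarea formula, together with the elementary identity $\int_K v\,F(\nabla u)\,dx=\int_{-\infty}^{+\infty}\big(\text{const}-\int_{\{v<t\}\cap K}F(\nabla u)\,dx\big)\,dt$ obtained by writing $v(x)=\int_{-\infty}^{+\infty}(\chi_{\{t<0\}}-\chi_{\{v<t\}})(x)\,dt$ (Cavalieri). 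Combining these, one gets, up to a constant depending only on $K$ and $u$ but not on $v$,
\[
J_{F,u}(v)=\int_{-\infty}^{+\infty} J^{K}_{F,u}(\{v<t\})\,dt + c(K,u).
\]

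First I would prove the ``only if'' direction. Assume $u$ is a weak solution and fix $t$; I must show $E_t$ minimizes $J_{F,u}$ in $\Omega$. Given a competitor $G$ with $E_t\Delta G\subset\subset A\subset\subset\Omega$ supported in a compact $K$, I construct a Lipschitz function $v$ by modifying $u$ only on a thin slab around level $t$ so that its sublevel set $\{v<t\}$ equals $G$ near $K$ while $\{v<s\}$ stays squeezed between $E_{t-\eta}$ and $E_{t+\eta}$ for $|s-t|<\eta$ — concretely, interpolate using $v=\max\{t-\eta,\min\{t+\eta, w\}\}$-type truncations where $w$ encodes $G$ via its signed distance, following Huisken--Ilmanen verbatim. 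Plugging $v$ into the minimization inequality $J_{F,u}(u)\le J_{F,u}(v)$ and using the integral identity above, the contributions from levels outside $(t-\eta,t+\eta)$ cancel, leaving $\int_{t-\eta}^{t+\eta}[J^K_{F,u}(E_s)-J^K_{F,u}(\{v<s\})]\,ds\le 0$; letting $\eta\to 0$ and using continuity of $u$ (so that $E_s\to E_t$ and $\{v<s\}\to G$ in $L^1_{loc}$, with lower semicontinuity of the anisotropic perimeter $\int F(\nu)\,d\mathcal H^{n-1}$ under $L^1$ convergence) yields $J^K_{F,u}(E_t)\le J^K_{F,u}(G)$.

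For the converse, assume each $E_t$ minimizes $J_{F,u}$ in $\Omega$ and let $\varphi$ be a Lipschitz competitor equal to $u$ outside a precompact $K$. Using the integral identity for both $u$ and $\varphi$ (their sublevel sets agree outside a fixed compact set, so the difference of the constants $c(K,u)$ cancels), $J_{F,u}(\varphi)-J_{F,u}(u)=\int_{-\infty}^{+\infty}\big[J^{K}_{F,u}(\{\varphi<t\})-J^{K}_{F,u}(E_t)\big]\,dt$, and each integrand is $\ge 0$ by the minimizing hypothesis applied with $G=\{\varphi<t\}$ (noting $E_t\Delta\{\varphi<t\}\subseteq\{\varphi\neq u\}\cap K$ is precompact), so $J_{F,u}(u)\le J_{F,u}(\varphi)$. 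The subsolution/supersolution versions follow the same scheme with one-sided competitors.

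The main obstacle I anticipate is not conceptual but technical: justifying the level-set identity and the approximation argument requires care because the integrand $F(\nu)$ is evaluated on reduced boundaries of merely finite-perimeter sets, and because constructing the Lipschitz interpolant $v$ whose sublevel sets realize a prescribed set $G$ near $K$ while remaining Lipschitz and pinched in the slab is delicate — one must use, for instance, a signed-distance construction and verify the slab contributions are $O(\eta)$ uniformly. Handling the anisotropy itself is mild: $F(\nu)$ is comparable to $|\nu|$ by the uniform ellipticity (ii), so the anisotropic perimeter functional enjoys the same compactness and lower semicontinuity as the Euclidean one, and the anisotropic coarea formula holds because $F(\nabla v)$ is a Borel function of $\nabla v$; thus all of Huisken--Ilmanen's measure-theoretic lemmas transfer with only notational changes.
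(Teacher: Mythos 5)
Your proposal follows essentially the same route as the paper: the coarea/Cavalieri identity $J_{F,u}(\varphi)=\int_a^b J^K_{F,u}(\{\varphi<t\})\,dt+\mathrm{const}$ gives the direction ``each $E_t$ minimizes $\Rightarrow$ $u$ is a weak solution'' by termwise comparison, and the converse is handled exactly as in Huisken--Ilmanen's Lemma 1.1 (which is all the paper does for that direction, citing \cite{HI1} without reproducing the slab/interpolant construction you sketch). The anisotropic modifications you note (comparability of $F(\nu)$ with $|\nu|$, lower semicontinuity of the $F$-perimeter) are indeed the only changes needed, so your argument is correct and matches the paper's.
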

\begin{proof}
By the co-area formula, we have for a choice of $a<b$ such that $a<u<b$ and $a<\varphi<b$ in $K$, that
\begin{align}
\notag J_{F,u}(\varphi)&=\int_{K}(F(\nabla \varphi) +\varphi F(\nabla u))
dx=\\ &=\int_{a}^{b} dt\int_{(\de\{\varphi<t\})\cap K}\left(F\left(\frac{\nabla \varphi}{|\nabla \varphi|}\right)+\frac{t}{|\nabla \varphi|} F(\nabla u)\right)d\sigma\\
\notag &= \int_{a}^{b} \int_{(\de\{\varphi<t\})\cap K}F(\nu)\,d\sigma -\int_{K}\int_{a}^{b}\chi_{\{\varphi<t\}}F(\nabla u)d\sigma + \\
&\notag\qquad\qquad\qquad\qquad\qquad\qquad\qquad\qquad\qquad+b\int_{K}F(\nabla u)\,dx\\
\notag &= \int_{a}^{b} J^{K}_{u}(\{\varphi<t\}) + b\int_{K}F(\nabla u)\,dx.
\end{align}
Then, if for any $t$, $E_{t}$ is a minimizer of the set functional $J_{F,u}$, then
\[
J_{F,u}(\varphi) \ge J_{F,u}(u),
\]
that gives the minimality of $u$. 

The viceversa can be proved  exactly  as in the proof of Lemma 1.1 in \cite{HI1}. 
\end{proof}

Next we study the weak formulation of IAMCF with initial condition.
\begin{defn}\label{weak2def} Let $E_0$ be an open set with smooth boundary. Let $\{E_t\}_{t>0}$ be a nested family of open sets in $\Rn$.
\begin{itemize}
\item[(i)] 
 $u$ is called a weak solution of \eqref{prob1} with initial condition $E_0$ if $u\in C^{0,1}_{loc}(\Rn)$, $E_0=\{u<0\}$ and $u$ is a weak solution of \eqref{prob1} in $\Rn\setminus \overline{E_0}$.
\item[(ii)] Define $u$ by the characterization $E_t=\{u<t\}$.
$\{E_t\}_{t>0}$ is called a weak solution of \eqref{iamcf} with initial condition $E_0$ if $u\in C^{0,1}_{loc}(\Rn)$ and $E_t$ minimizes $J_{F,u}$ in $\Rn\setminus E_0$ for each $t>0$.
\end{itemize}
\end{defn}

From Proposition \ref{equiv}, it is easy to see the above two definitions are also equivalent.
\begin{prop}  $u$ is a weak solution of \eqref{prob1} with initial condition $E_0$ if and only if $\{u<t\}_{t>0}$ is a weak solution of \eqref{iamcf} with initial condition $E_0$.
\end{prop}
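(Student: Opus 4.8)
The plan is to deduce this equivalence directly from Proposition \ref{equiv} applied to the open set $\Omega = \Rn \setminus \overline{E_0}$. Both definitions in Definition \ref{weak2def} are built by restricting the notions studied earlier to this particular $\Omega$, so the proof should essentially be an unwinding of definitions plus one already-established equivalence. First I would fix the function $u \in C^{0,1}_{loc}(\Rn)$ with $E_0 = \{u < 0\}$; this common hypothesis appears in both parts of the statement, so the content to verify is the equivalence of ``$u$ is a weak solution of \eqref{prob1} in $\Rn \setminus \overline{E_0}$'' with ``$E_t$ minimizes $J_{F,u}$ in $\Rn \setminus E_0$ for each $t > 0$.''

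Next I would apply Proposition \ref{equiv} with $\Omega = \Rn \setminus \overline{E_0}$: it gives that $u$ is a weak solution of \eqref{prob1} in $\Rn \setminus \overline{E_0}$ if and only if for each $t \in \R$ the superlevel-by-below set $E_t^\Omega := \{x \in \Rn\setminus\overline{E_0} : u(x) < t\}$ minimizes $J_{F,u}$ in $\Rn\setminus\overline{E_0}$. The remaining task is a bookkeeping step: relating $E_t^\Omega$ to the sets $E_t = \{u < t\}$ of Definition \ref{weak2def}(ii), and relating ``minimizes in $\Rn\setminus\overline{E_0}$'' to ``minimizes in $\Rn\setminus E_0$'', and finally reducing the range of $t$ from all of $\R$ to $t > 0$. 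For $t \le 0$ one has $E_t \subseteq \overline{E_0}$, so these levels carry no information outside $E_0$; for $t > 0$ one has $E_t^\Omega = E_t \setminus \overline{E_0}$ and, since competitors $G$ in the minimization satisfy $E_t^\Omega \Delta G \subset\subset \Rn\setminus\overline{E_0}$, the functional $J_{F,u}$ evaluated on $K \subset \Rn\setminus\overline{E_0}$ does not see the difference between $E_t$ and $E_t\setminus\overline{E_0}$. Since $\partial E_0$ is smooth, it has $\mathcal{H}^{n-1}$-finite measure but, more to the point, competitors are supported away from it, so adding or removing the null-effect piece $\overline{E_0}$ changes neither $\int_{\partial^* G \cap K} F(\nu)$ nor $\int_{G\cap K} F(\nabla u)$. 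This shows $E_t$ minimizes $J_{F,u}$ in $\Rn\setminus E_0$ for $t > 0$ exactly when $E_t^\Omega$ minimizes it in $\Rn\setminus\overline{E_0}$ for $t > 0$.

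To close the argument I would note that it suffices to check the minimization property for $t > 0$ rather than all $t$: the weak-solution property in $\Rn\setminus\overline{E_0}$ via Definition \ref{defweak} only constrains $u$ on precompact $K \subset \Rn\setminus\overline{E_0}$, and by the co-area computation in the proof of Proposition \ref{equiv} such $K$ only involve level sets $\{u<t\}$ with $t$ in a range $(a,b)$ where $a < u < b$ on $K$; since $K \subset \Rn\setminus\overline{E_0}$ forces $u > 0$ on $K$ (as $E_0 = \{u<0\}$ and $\partial E_0 = \{u = 0\}$ up to the usual conventions, the relevant inequality being $u \ge 0$ off $E_0$), only $t > 0$ is ever tested. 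Hence the restriction to $t > 0$ loses nothing, and combining this with the previous paragraph gives the claimed equivalence. The main obstacle, such as it is, is the boundary bookkeeping near $\partial E_0$: one must be careful that $\{u < 0\} = E_0$ together with smoothness of $\partial E_0$ really does force $u \ge 0$ on $\Rn \setminus E_0$ and that the level set $\{u = 0\}$ does not contribute perimeter that the competitors could exploit; once that is pinned down the rest is immediate from Proposition \ref{equiv}.
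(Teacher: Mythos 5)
Your overall route --- apply Proposition \ref{equiv} with $\O=\Rn\setminus\overline{E_0}$ and then reconcile the bookkeeping --- is exactly what the paper intends (it gives no proof beyond the sentence ``From Proposition \ref{equiv}, it is easy to see\dots''), but two of your reduction steps do not hold as stated. The first concerns the claim that ``only $t>0$ is ever tested.'' In the co-area computation of Proposition \ref{equiv} the interval $(a,b)$ must contain the values on $K$ of \emph{both} $u$ \emph{and} the competitor $\varphi$, and $\varphi$ is free to be negative on $K$ even though $u\ge 0$ there; one cannot reduce to $\varphi\ge 0$ by truncation, since replacing $\varphi$ by $\max(\varphi,0)$ decreases the gradient term but increases the term $\int\varphi F(\nabla u)$. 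Hence levels $t\le 0$ do enter, and for those levels $\{x\in\O:u<t\}=\emptyset$, so the direction ``set solution $\Rightarrow$ function solution'' additionally requires that the \emph{empty set} minimize $J_{F,u}$ in $\O$, i.e.\ that $\int_{\de^*G\cap K}F(\nu)\ge\int_{G\cap K}F(\nabla u)$ for every $G\subset\subset\O$ of finite perimeter. This is true but needs an argument: compare $E_s$ with $E_s\cup G$ for $s>0$ (admissible by Definition \ref{weak2def}(ii)), use $\int_{\de^*(E_s\cup G)\cap K}F(\nu)\le\int_{\de^*E_s\cap K}F(\nu)+\int_{\de^*G\cap K}F(\nu)$ to get $\int_{G\setminus E_s}F(\nabla u)\le\int_{\de^*G\cap K}F(\nu)$, and let $s\downarrow 0$, noting that $\nabla u=0$ a.e.\ on $\{u=0\}$.

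The second gap is the treatment of the mismatch between ``minimizes in $\Rn\setminus E_0$'' (Definition \ref{weak2def}(ii)) and ``minimizes in $\Rn\setminus\overline{E_0}$.'' These are not interchangeable by observing that competitors are ``supported away from'' $\de E_0$: in the set formulation the symmetric difference $E_t\Delta G$ need only be precompact in the \emph{closed} set $\Rn\setminus E_0$, so its closure may meet $\de E_0$, and such competitors are not reached by minimality in the open set $\Rn\setminus\overline{E_0}$. Thus in the direction ``function solution $\Rightarrow$ set solution'' you must separately handle competitors that modify $E_t$ up to $\de E_0$ (using that $E_t\supseteq\{u\le 0\}\supseteq\overline{E_0}$ is open and an approximation/limiting argument exploiting the smoothness of $\de E_0$). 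Relatedly, your assertion that inserting or deleting $\overline{E_0}$ ``changes neither'' integral is literally false --- $\de^*(E_t\setminus\overline{E_0})$ acquires the piece $\de E_0$, which has positive anisotropic perimeter --- what is true is that this contribution is common to $E_t\setminus\overline{E_0}$ and to every admissible competitor, so only the \emph{differences} $J_{F,u}^K(E)-J_{F,u}^K(G)$ are unaffected. Neither issue threatens the statement, but both are genuine steps your plan currently skips.
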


The weak solution is unique.
\begin{prop}[Uniqueness of the weak solutions]
\begin{enumerate}
\item Let $u$ and $v$ be weak solutions to \eqref{prob1} in $\Omega$ in the sense of Definition \ref{defweak}, and $\{v>u\}\subset\subset\Omega$. Then $v\le u$ in $\Omega$;
\item if $\{E_{t}\}_{t>0}$ and $\{F_{t}\}_{t>0}$ solve \eqref{iamcf} in the sense of Definition \ref{weak2def}, with initial data $E_{0},F_{0}$ respectively, and $E_{0}\subseteq F_{0}$, then $E_{t}\subseteq F_{t}$ as long as $E_{t}$ is precompact. In particular, for a given $E_{0}$ there exists at most one solution $\{E_{t}\}_{t>0}$ of \eqref{iamcf} such that $E_{t}$ is precompact.
\end{enumerate}
\end{prop}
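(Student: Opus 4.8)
The plan is to follow the classical Huisken–Ilmanen comparison argument, adapted to the Finsler setting via the decomposition of $J_{F,u}$ into supersolution and subsolution behavior. For part (1), I would argue by contradiction. Suppose $\{v>u\}$ is nonempty and precompact in $\Omega$. The key idea is to exploit the minimizing property of the level sets: by Proposition \ref{equiv}, for each $t$ the set $E_t^u=\{u<t\}$ minimizes $J_{F,u}$ in $\Omega$, and $E_t^v=\{v<t\}$ minimizes $J_{F,v}$ in $\Omega$. First I would choose a regular value $t$ and compare $E_t^v$ against $E_t^v\cup E_t^u$ (using that $v$ minimizes on the outside, since enlarging stays within $\Omega$ once things are precompact) and simultaneously compare $E_t^u$ against $E_t^u\cap E_t^v$ (using that $u$ minimizes on the inside). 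The crucial point is that where $\{v>u\}$, the two level sets are ordered the "wrong" way, and the submodularity-type inequalities
\[
J_{F,u}(E\cap G)+J_{F,u}(E\cup G)\le J_{F,u}(E)+J_{F,u}(G)
\]
stated in the excerpt, combined with the analogous one for $v$, force the competitor configurations to produce a strict decrease in energy unless $\mathcal H^{n-1}(\partial^* E_t^u\cap\partial^* E_t^v)$-type overlap terms vanish. The difference between the $J_{F,u}$ and $J_{F,v}$ functionals lies only in the bulk term $\int F(\nabla u)$ versus $\int F(\nabla v)$; integrating the resulting inequalities in $t$ over the region where $v>u$ and using the coarea formula (exactly as in the proof of Proposition \ref{equiv}) converts the surface comparisons back into the statement $J_{F,u}(v)+J_{F,v}(u)\le J_{F,u}(u)+J_{F,v}(v)$ on an appropriate $K$, which together with the defining minimality inequalities $J_{F,u}(u)\le J_{F,u}(v)$ and $J_{F,v}(v)\le J_{F,v}(u)$ pinches everything to equality and ultimately yields $v\le u$.

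For part (2), I would translate the set-theoretic statement into the function language via $E_t=\{u<t\}$, $F_t=\{v<t\}$, noting $E_0\subseteq F_0$ means $u\ge v$ on a neighborhood of $\partial E_0$ in the relevant sense, or more precisely that $u$ and $v$ are weak solutions in $\R^n\setminus E_0$ and $\R^n\setminus F_0$ respectively. The claim $E_t\subseteq F_t$ for as long as $E_t$ is precompact is then $\{u<t\}\subseteq\{v<t\}$, i.e. $v\le u$ wherever $u<t$; I would apply part (1) on the domain $\Omega=\R^n\setminus \overline{E_0}$ (or a large ball intersected with it), checking that $\{v>u\}$ is precompact there precisely because $E_t=\{u<t\}$ is assumed precompact and $u\to\infty$ controls things at infinity. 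The final uniqueness-for-given-$E_0$ assertion is the special case $E_0=F_0$, giving $E_t\subseteq F_t$ and $F_t\subseteq E_t$ simultaneously.

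The main obstacle I anticipate is handling the boundary/initial interface rigorously: one must be careful that the competitor sets used in the comparison stay admissible (i.e. their symmetric difference with the original set remains compactly contained in the correct domain, and contains $E_0$ or avoids it as required), and that measure-zero sets of bad values of $t$ do not obstruct the coarea integration. A secondary technical point is that $F$ is merely a Minkowski norm, not strictly convex in a way that gives pointwise strict monotonicity for free, so the strictness needed to conclude $v\le u$ (rather than merely $v\le u$ a.e. or on a subset) must come from the structure of the minimization — one extracts it by noting that if $\{v>u\}$ had positive measure, the replacement $u\mapsto\min\{u,v\}$, $v\mapsto\max\{u,v\}$ strictly lowers one of the two energies because of the bulk terms $\varphi F(\nabla u)$, whose sign changes across $\{u=v\}$. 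I would lean on the cited Lemma 1.1-type argument in \cite{HI1}, since the Finsler modifications are confined to replacing $|\nabla\cdot|$ by $F(\nabla\cdot)$ and $|\nu|$ by $F(\nu)$ throughout, and all the convexity inequalities used there have the stated analogues here.
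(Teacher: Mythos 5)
The paper does not actually spell out a proof here: it simply refers to Huisken--Ilmanen \cite{HI1}, pp.~377--378, so your plan to transplant their comparison argument is the intended route. The difficulty is that the step where you claim the argument ``pinches everything to equality and ultimately yields $v\le u$'' does not close. With only $\{v>u\}$ precompact, the admissible cross-competitors are $\max(u,v)$ for $u$ and $\min(u,v)$ for $v$; in particular $J_{F,u}(u)\le J_{F,u}(v)$ is \emph{not} available, since $\{u\neq v\}$ need not be precompact. Adding the two admissible inequalities and cancelling the gradient terms via $F(\nabla\max(u,v))+F(\nabla\min(u,v))=F(\nabla u)+F(\nabla v)$ a.e.\ (or, equivalently, running your set-theoretic version with $E^u_t\cap E^v_t$ and $E^u_t\cup E^v_t$, submodularity of the anisotropic perimeter, and the coarea integration in $t$) yields only
\[
\int_{\{v>u\}}(v-u)\bigl(F(\nabla u)-F(\nabla v)\bigr)\,dx\;\ge\;0,
\]
which carries no information about whether $\{v>u\}$ is empty: both factors can well be positive. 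The bulk terms $\varphi F(\nabla u)$ do not ``change sign across $\{u=v\}$'' in any way that produces strictness; they are precisely what produces the displayed inequality and nothing more.

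The missing ingredient is the strictification step of Huisken--Ilmanen: one compares $u$ not with $v$ itself but with a competitor built from a perturbation of $v$ that is a \emph{strict} subsolution. Because the functional is inhomogeneous under $v\mapsto v/(1+\epsilon)$, the rescaled function minimizes a functional whose bulk weight carries an extra factor $(1+\epsilon)$; feeding this into the cross-comparison produces an additional term of definite sign, of order $\epsilon\int(\cdots)F(\nabla v)\,dx$ over the contact region, which forces $F(\nabla v)=0$ a.e.\ there; one then concludes $v$ is locally constant on that open set and derives a contradiction with $v=u$ on its boundary and $v>u$ inside. (One must also verify that the perturbed set $\{v_\epsilon>u\}$ stays precompact, which is where normalizations and the hypotheses of part (2) enter.) Your reduction of part (2) to part (1) via $\Omega=\R^n\setminus\overline{E_0}$, and the final uniqueness assertion as the case $E_0=F_0$, are fine, and the anisotropic analogues of the convexity and submodularity inequalities do go through verbatim as you say; but part (1) itself stalls at the inconclusive inequality above without the extra perturbation idea.
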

\begin{proof}
See Huisken-Ilmanen \cite{HI1}, page 377-378.
\end{proof}

\subsection{Properties of weak IAMCF}
\begin{defn} Let $\O$ be an open set.
\begin{itemize}
\item[(i)]  A set $E$ is called an $F$-minimizing hull in $\O$ if 
\begin{equation}\label{f-mini}
\sigma_F(\de^*E\cap K)\le \sigma_F(\de^*G\cap K)
\end{equation}
for any $G$ containing $E$ such that $G\setminus E\subset\subset \O$ and any compact set $K$ containing $G\setminus E$.
\item[(ii)] A set $E$ is called a strictly $F$-minimizing hull in $\O$ if it is an $F$-minimizing hull in $\O$ and equality holds in \eqref{f-mini} if and only if 
\[
G\cap \O= E\cap \O \hbox{ a.e. }
\]
\item[(iii)] Given a measurable set $E$, the set $E'$ is defined to be the intersection of all the  strictly $F$-minimizing hulls in $\O$ that contain $E$.
\end{itemize}
\end{defn}
One has the following properties for weak solutions of IAMCF. 

\begin{prop} Let $u$ be a weak solution of \eqref{prob1} with initial condition $E_0$. Set 
\[
E_t=\{u<t\}, \quad E_t^+={\rm int}\{u\le t\}.
\]
Then
\begin{itemize}
\item[(i)]  For $t>0$, $E_t$ is an $F$-minimizing hull in $\Rn$;
\item[(ii)] Fot $t\ge 0$, $E_t^+$ is a strictly $F$-minimizing hull in $\Rn$ and $E_t'=E_t^+$ if it is precompact;
\item[(iii)] For $t>0$, $\sigma_F(\de E_t)= \sigma_F(\de E_t^+)$ provided $E_t^+$ is precompact. This holds true for $t=0$ if $E_0$ is an $F$-minimizing hull. 
\item[(iv)] $\s_F(E_t)=e^t\s_F(E_0)$ provided $E_0$ is an $F$-minimizing hull.
\end{itemize}
\end{prop}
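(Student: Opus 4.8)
These four assertions are the anisotropic analogues of Huisken--Ilmanen's Minimizing Hull Property and Exponential Growth Lemma (\cite{HI1}, Section~1), and the plan is to run their arguments with the Euclidean area replaced by the anisotropic area $G\mapsto\sigma_F(\de^*G):=\int_{\de^*G}F(\nu)\,d\mathcal H^{n-1}$. The only structural properties of this functional that will be used are its lower semicontinuity under $L^1_{\mathrm{loc}}$-convergence of sets (which holds because $F$ is convex and $1$-homogeneous) and the submodularity inequality for $J_{F,u}$ recorded after Definition~\ref{setdef}; the equation \eqref{prob1} enters only through $F(\nabla u)\ge 0$ and the elementary fact that $\nabla u=0$ a.e.\ on every level set $\{u=\tau\}$ of the Lipschitz function $u$, so that $F(\nabla u)=0$ a.e.\ on $E_t^+\setminus E_t\subseteq\{u=t\}$. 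I also use repeatedly that, by the decomposition remark after Definition~\ref{weak2def}, $E_s$ minimizes $J_{F,u}$ in $\R^n\setminus E_0$ on the inside and on the outside for every $s>0$, and that $\overline{E_0}\subseteq E_t$ for $t>0$, so that sets differing from $E_t$ on a precompact set are admissible competitors. The genuinely delicate point, which I isolate now, is that for \emph{every} $t$ (not merely for a.e.\ $t$) the topological boundary of $\{u\le t\}$ is Lebesgue-null, so that $\{u\le t\}$ and $E_t^+$ may be used interchangeably in all limiting and lower-semicontinuity steps; as in \cite{HI1} this is settled by noting that the offending part of $\{u=t\}$ carries no gradient of $u$ and, being detached from the minimizing configuration, cannot support positive anisotropic perimeter, hence is null by the anisotropic isoperimetric inequality.

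For (i), fix $t>0$ and let $G\supseteq E_t$ with $G\setminus E_t\subset\subset\R^n$ and $K$ compact containing $G\setminus E_t$. Since $E_0\subseteq E_t\subseteq G$, $G$ is an admissible outward competitor for $E_t$, so $J_{F,u}(E_t)\le J_{F,u}(G)$; expanding this and discarding the nonnegative term $\int_{(G\setminus E_t)\cap K}F(\nabla u)$ gives $\sigma_F(\de^*E_t\cap K)\le\sigma_F(\de^*G\cap K)$. For (ii), I first show that $E_t^+$ minimizes $J_{F,u}$ on the outside in $\R^n\setminus E_0$, which by the same computation makes it an $F$-minimizing hull: given $G\supseteq E_t^+$ with $G\setminus E_t^+\subset\subset\R^n$, for $s>t$ we have $E_t^+\subseteq E_s$, so $G\cup E_s$ is an admissible outward competitor for $E_s$, and submodularity together with $J_{F,u}(E_s)\le J_{F,u}(G\cup E_s)$ yields $J_{F,u}(G\cap E_s)\le J_{F,u}(G)$; letting $s\downarrow t$, $G\cap E_s\downarrow G\cap\{u\le t\}$ in $L^1_{\mathrm{loc}}$, and since this limit set agrees with $E_t^+$ up to a null set, lower semicontinuity of $\sigma_F$ and $F(\nabla u)\ge 0$ give $J_{F,u}(E_t^+)\le J_{F,u}(G)$. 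Strictness and the identity $E_t'=E_t^+$ (for $E_t^+$ precompact) then follow as in \cite{HI1}: an equality competitor $G\supseteq E_t^+$ would, using $F(\nabla u)=0$ a.e.\ on $G\Delta E_t^+\subseteq\{u=t\}$ and the inward minimality of $E_t^+$ established in the next paragraph, satisfy $J_{F,u}(E_t^+)=J_{F,u}(G)$, hence be an $F$-minimizing hull with the same anisotropic boundary area, and an intersection argument (an intersection of strictly $F$-minimizing hulls is again one) singles out $E_t^+$ as the smallest $F$-minimizing hull containing $E_t$.

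For (iii), the inequality $\sigma_F(\de E_t)\le\sigma_F(\de E_t^+)$ is just (i) applied to the competitor $E_t^+\supseteq E_t$ (with $\sigma_F(\de E)=\sigma_F(\de^*E)$ for minimizing hulls, by the usual density estimates). For the reverse, $E_t^+$ minimizes $J_{F,u}$ on the inside: if $G\subseteq E_t^+$ with $E_t^+\setminus G\subset\subset\R^n$ then $G\subseteq E_s$ for all $s>t$, so $J_{F,u}(E_s)\le J_{F,u}(G)$, and letting $s\downarrow t$ (lower semicontinuity of $\sigma_F$, convergence $\int_{E_s}F(\nabla u)\to\int_{E_t^+}F(\nabla u)$, and $\{u\le t\}=E_t^+$ up to a null set) gives $J_{F,u}(E_t^+)\le J_{F,u}(G)$; taking $G=E_t$ and using $\int_{E_t^+}F(\nabla u)=\int_{E_t}F(\nabla u)$ yields $\sigma_F(\de^*E_t^+)\le\sigma_F(\de^*E_t)$. (For $t=0$ the hypothesis that $E_0$ is an $F$-minimizing hull plays the role of (i).) Finally, for (iv), combining the outward minimality of $E_t^+$ with the inward minimality of $E_s$ for $0\le t<s$, and absorbing the flat-spot integrals as above, gives the exact identity
\[
\sigma_F(\de E_s)=\sigma_F(\de E_t^+)+\int_{E_s\setminus E_t^+}F(\nabla u)\,dx ;
\]
the coarea formula together with $F(\nabla u)/|\nabla u|=F(\nabla u/|\nabla u|)=F(\nu)$ on $\{u=\tau\}$ rewrites the last integral as $\int_t^s\sigma_F(\de^*E_\tau)\,d\tau$, so with $m(\tau):=\sigma_F(\de E_\tau^+)$ and (iii) this becomes the integral equation $m(s)-m(t)=\int_t^s m(\tau)\,d\tau$ on $[0,\infty)$, with $m(0)=\sigma_F(\de E_0)$ since $E_0$ is an $F$-minimizing hull. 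As $m$ is nonnegative and nondecreasing, hence locally integrable, it is forced to be absolutely continuous with $m'=m$, whence $m(t)=e^t\,\sigma_F(\de E_0)$, which is (iv). The main obstacle is the Lebesgue-nullity of $\partial\{u\le t\}$ flagged in the first paragraph; all remaining points --- admissibility of the various competitors, the localization of the submodularity inequality, the density estimates identifying $\de^*E$ with $\de E$ for minimizing hulls, and the intersection argument for $E_t'=E_t^+$ --- are carried out exactly as in \cite{HI1}, now with $\sigma_F$ in place of the Euclidean area.
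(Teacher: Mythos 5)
Your proposal is correct and follows exactly the route the paper intends: the paper's own ``proof'' is simply a citation to Huisken--Ilmanen \cite{HI1}, pp.~372--373, and what you have written is a faithful transcription of their Minimizing Hull Property and Exponential Growth Lemma with the Euclidean perimeter replaced by $\sigma_F$, using only lower semicontinuity, the submodularity of $J_{F,u}$, and $F(\nabla u)=0$ a.e.\ on $\{u=t\}$, just as in the reference. The one point to state more carefully is the null-measure identification of $\{u\le t\}$ with $E_t^+$, which in \cite{HI1} rests on the density estimates for minimizers of $J_{F,u}$ rather than on an isoperimetric argument, but this is the same ingredient the paper implicitly invokes.
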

\begin{proof}
See Huisken-Ilmanen \cite{HI1}, page 372-373.
\end{proof}






Analog with the classical case, we define the weak anisotropic mean curvature by the first variational formula, Proposition \ref{HL}.

\begin{defn}
Let $N\subset \Rn$  be a hypersurface of $C^{1} $ or $C^{1}$ with a small singular set and locally finite Hausdorff measure. A locally integrable function $H_{F}$ on $N$ is called weak anisotropic mean curvature provided it satisfies the second equality in \eqref{var0} for every $V \in C_{c}^{\infty}(\Rn)$.
\end{defn}

For smooth IAMCF given by $\{u=t\}$, one sees $H_F=F(\nabla u)$. We show next  weak solutions of \eqref{prob1} still have this property.
\begin{prop}
Let $u$ be a weak solution of \eqref{prob1} with initial condition $E_0$ and let $N_t=\de E_t=\de \{u<t\}$. Then for a.e. $t$, the weak anisotropic mean curvature $H_F$ of $N_t$ satisfies
\[
H_F= F(\nabla u)\hbox{ a.e. }x\in N_t.
\]
\end{prop}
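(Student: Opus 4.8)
The plan is to establish the identity $H_F = F(\nabla u)$ on $N_t$ for a.e. $t$ by combining the variational characterization of the weak anisotropic mean curvature with a localized version of the minimizing property of the level sets. Fix a vector field $V \in C_c^\infty(\mathbb{R}^n)$ and consider the flow $\psi_s$ it generates; we want to show that $\frac{d}{ds}\big|_{s=0}\sigma_F(\psi_s(N_t)) = \int_{N_t} F(\nabla u)\langle V,\nu\rangle\, d\mathcal{H}^{n-1}$. The left-hand side equals $\int_{N_t}\operatorname{div}_{F,N_t}(V)F(\nu)\,d\mathcal{H}^{n-1}$ by Proposition \ref{HL}, so it suffices to match this with the weighted integral. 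To do this I would use the fact that $E_t$ minimizes $J_{F,u}$ in $\mathbb{R}^n$ (Proposition \ref{equiv} together with Definition \ref{weak2def}): comparing $E_t$ against its image $\psi_s(E_t)$ (a compactly supported perturbation) gives, for each small $s$ of either sign,
\[
\sigma_F(\partial^* E_t \cap K) - \int_{E_t \cap K} F(\nabla u)\,dx \le \sigma_F(\partial^*(\psi_s(E_t)) \cap K) - \int_{\psi_s(E_t)\cap K} F(\nabla u)\,dx,
\]
where $K$ is any compact set containing the support of the perturbation. Differentiating at $s=0$ (the right side has a minimum there) yields that the first variation of $\sigma_F$ along $V$ equals the first variation of the bulk term $\int F(\nabla u)$, which by the divergence theorem and the coarea formula is $\int_{N_t} F(\nabla u)\langle V,\nu\rangle\,d\mathcal{H}^{n-1}$.

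The delicate point is that a one-sided minimization inequality only yields a one-sided first-variation inequality; to get equality one needs to apply the argument with both $V$ and $-V$, which is fine since $E_t$ minimizes $J_{F,u}$ (not merely on the inside or outside) in $\mathbb{R}^n$. Another technical issue is regularity: $N_t = \partial\{u<t\}$ is only known to be $C^1$ with a small singular set (or a set of locally finite perimeter), so Proposition \ref{HL} must be invoked in the weak form of the definition, and the differentiation of $s\mapsto \sigma_F(\partial^*(\psi_s(E_t))\cap K)$ needs to be justified. This is where one uses that for a.e. $t$ the level set $N_t$ has the requisite regularity — the bad values of $t$ form a null set by the coarea formula applied to the Lipschitz function $u$, together with the structure theory of sets of finite perimeter (reduced boundary coincides with topological boundary up to $\mathcal{H}^{n-1}$-null sets for the relevant level sets). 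The computation that $s\mapsto \sigma_F(\partial^*\psi_s(E_t)\cap K)$ is differentiable with derivative $\int_{N_t}\operatorname{div}_{F,N_t}(V)F(\nu)$ is the content of the first-variation formula extended to the weak setting, and can be quoted from the weak-mean-curvature framework (Huisken-Ilmanen \cite{HI1}).

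Putting this together: for a.e. $t$, the weak anisotropic mean curvature $H_F$ of $N_t$ exists (since $N_t$ has locally finite $\mathcal{H}^{n-1}$ measure and the first variation of $\sigma_F$ is represented by an $L^1_{\mathrm{loc}}$ function — a consequence of the minimizing property), and the identity
\[
\int_{N_t} H_F \langle V,\nu\rangle\, d\mathcal{H}^{n-1} = \left.\frac{d}{ds}\right|_{s=0}\sigma_F(\psi_s(N_t)) = \int_{N_t} F(\nabla u)\langle V,\nu\rangle\, d\mathcal{H}^{n-1}
\]
holds for every $V \in C_c^\infty(\mathbb{R}^n)$. Since $V$ is arbitrary and $\nu$ is the unit normal, the fundamental lemma of the calculus of variations gives $H_F = F(\nabla u)$ $\mathcal{H}^{n-1}$-a.e. on $N_t$. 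I expect the main obstacle to be the careful justification that the two-sided comparison is legitimate and that the bulk term's first variation is exactly the boundary integral $\int_{N_t}F(\nabla u)\langle V,\nu\rangle$, i.e. handling the coarea/divergence-theorem bookkeeping when $N_t$ is merely rectifiable; all of this follows the template of \cite{HI1} but must be transcribed with $F(\nu)$ in place of $|\nu|$ and $F(\nabla u)$ in place of $|\nabla u|$.
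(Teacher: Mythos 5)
Your argument is sound in outline but takes a genuinely different route from the paper. You perturb each level set $E_t$ individually, comparing $E_t$ with $\psi_s(E_t)$ in the set functional $J_{F,u}^K(G)=\sigma_F(\partial^*G\cap K)-\int_{G\cap K}F(\nabla u)\,dx$ and differentiating at $s=0$. The paper instead perturbs the \emph{function}: it uses $J_{F,u}(u\circ\Phi^s)\ge J_{F,u}(u)$, computes $\frac{d}{ds}\big|_{s=0}\int_K F(\nabla(u\circ\Phi^s))\,dx$ by integration by parts (first for smooth $u$, then by approximation), and only afterwards applies the coarea formula to recognize the result as $\int_{-\infty}^{\infty}\int_{N_t\cap W}-\dive_{F,N}V\,F(\nu)\,d\sigma_t\,dt$; the companion term $\frac{d}{ds}\big|_{s=0}\int_K(u\circ\Phi^s)F(\nabla u)\,dx$ is trivial to differentiate because the $s$-dependence sits on the Lipschitz factor $u\circ\Phi^s$, not on the domain of integration. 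Your two-sidedness remark is correct ($E_t$ minimizes, not merely on the inside or outside), and the first-variation formula for $\sigma_F$ of a rectifiable set under a flow of diffeomorphisms is indeed available via the area formula.

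The step you flag as the ``main obstacle'' is, however, a real gap as written: differentiating $s\mapsto\int_{\psi_s(E_t)\cap K}F(\nabla u)\,dx$ requires pushing the domain across the merely $L^\infty$ integrand $F(\nabla u)$. The change of variables gives $\int_{E_t}F(\nabla u)(\psi_s(y))\,|\det D\psi_s(y)|\,dy$, and one cannot differentiate $F(\nabla u)\circ\psi_s$ in $s$ pointwise; to identify the derivative as $\int_{\partial^*E_t}F(\nabla u)\langle V,\nu\rangle\,d\mathcal{H}^{n-1}$ for a fixed $t$ one needs $\mathcal{H}^{n-1}$-a.e.\ point of $\partial^*E_t$ to be a suitable Lebesgue point of $F(\nabla u)$, which must itself be extracted for a.e.\ $t$ by a coarea argument. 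So your per-$t$ scheme ends up reintroducing exactly the $t$-integrated bookkeeping the paper performs from the start, and this is where the function-level variation is the cleaner (and the paper's actual) choice. Conversely, note that the paper's route also needs a localization in $t$ at the end (the vanishing of a $dt$-integral for all $V$ must be upgraded to vanishing of the inner integral for a.e.\ $t$, e.g.\ by testing with $\phi(u)V$ and separability), so neither route escapes an a.e.-$t$ selection argument; yours simply places it at a more delicate spot.
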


\begin{proof}Let $V \in C_{c}^{\infty}(\Rn)$ and $\Phi^s: \Rn\to \Rn, s\in (-\eps, \eps),$ be the flow of diffeomorphisms generated by $V$ and $\Phi^0=Id$.
Let $W$ be any precompact open set containing ${\rm supp}(V)$. 

Because $u$ be a weak solution of \eqref{prob1} in $\Rn\setminus \overline{E_0}$, we see
$J_{F,u}(u\circ \Phi^s)\le J_{F,u}(u).$
Thus $\frac{d}{ds}\Big|_{s=0}J_{F,u}(u\circ \Phi^s)=0$. Next we derive $\frac{d}{ds}\Big|_{s=0}J_{F,u}(u\circ \Phi^s)$.

First, we assume $u$ is smooth. Then
\begin{align*}
&\frac{d}{ds}\Big|_{s=0}\int_K F(\nabla (u\circ\Phi^s)) dx\\&=
\int_W F_{\xi_i}(\nabla u) \nabla_i (\frac{d}{ds}\Big|_{s=0}(u\circ\Phi^s))dx\\
&=\int_W F_{\xi_i}(\nabla u)\nabla_i (\nabla_j u  V^j)dx\\
&=\int_W F_{\xi_i}(\nabla u) \nabla^2_{ji} u   V^j +F_{\xi_i}(\nabla u) \nabla_j u  \nabla_i  V^j dx
\\&=\int_W -F_{\xi_i\xi_k}(\nabla u) \nabla^2_{kj} u \nabla_i u   V^j -F_{\xi_i}(\nabla u)  \nabla_i u  \nabla_j V^j +F_{\xi_i}(\nabla u) \nabla_j u  \nabla_i  V^j dx
\\&=\int_W -F(\nabla u)\dive V+ \<\nabla u,  \nabla_{F_{\xi}(\nabla u)}  V\> dx.
\end{align*}
By co-area formula, 
\begin{align*}
&\int_W -F(\nabla u)\dive V+\<\nabla u,  \nabla_{F_{\xi}(\nabla u)}  V\> dx
\\&=\int_{-\infty}^\infty \int_{N_t\cap W} -\dive V F(\nu)+ \<\nu, \nabla_{\nu_F} V\> d\sigma_t dt
\\&=\int_{-\infty}^\infty \int_{N_t\cap W} -\dive_{F, N}V  F(\nu)d\sigma_t dt.
\end{align*}
Thus \begin{align}\label{firstvariation}
\frac{d}{ds}\Big|_{s=0}\int_K F(\nabla (u\circ\Phi^s)) dx=\int_{-\infty}^\infty \int_{N_t\cap W} -\dive_{F, N}V F(\nu)d\sigma_t dt.
\end{align}
By an approximation argument, we see that the formula \eqref{firstvariation} is still true for $u$ only locally Lipschitz.

On the other hand, it is easy to see
\begin{align}\label{firstvariation2}
\frac{d}{ds}\Big|_{s=0}\int_K (u\circ\Phi^s) F(\nabla u) dx&=\int_W \<\nabla u, V\>F(\nabla u) dx\nonumber
\\&= \int_{-\infty}^\infty \int_{N_t\cap W}  \<\nu, V\>F(\nabla u) d\sigma_t dt.
\end{align}
Combining \eqref{firstvariation} and \eqref{firstvariation2}, we get
\begin{align}\label{firstvariation3}
0&=\frac{d}{ds}\Big|_{s=0}J_{F,u}(u\circ \Phi^s)\nonumber\\&= \int_{-\infty}^\infty \int_{N_t\cap W} -\dive_{F, N}V F(\nu)+  \<\nu, V\>F(\nabla u) d\sigma_t dt.
\end{align}
Finally, by the definition of the weak anisotropic mean curvature, we conclude from \eqref{firstvariation3} that 
\[
H_F= F(\nabla u)\hbox{ a.e. }x\in N_t \hbox{ a.e. }t.
\]
\end{proof}


\section{Existence of solutions and gradient estimates}

For any $p>1$, we will consider the following auxiliary problem
\begin{equation}
\label{p_change}
\left\{
\begin{array}{ll}
\dive\left( F^{p-1}(\nabla v)F_{\xi}(\nabla v) \right) = 0 & \text{in }\Omega, \\[.2cm]
v=1 &\text{in }\Omega^{c}.\\
v\to 0&\text{as } x\to\infty.
\end{array}
\right.
\end{equation}

\begin{prop}
\label{arm}
If $1<p<n$, then there exists a unique positive solution $v_{p}\in C^{1,\alpha}_{\textrm{loc}}(\overline\Omega)\cap C^\infty(\overline\Omega\setminus \{\nabla v_p=0\})$ of \eqref{p_change}. If $\mathcal W_{r}(x_0)\subset \Omega^{c}\subset \mathcal W_{s}(y_0)$, then
\begin{equation}
\label{th:confr}
\left(\frac{r}{F^{o}(x-x_{0})}\right)^{\frac{n-p}{p-1}}\le v_{p}(x)
\le \left(\frac{s}{F^{o}(x-y_{0})}\right)^{\frac{n-p}{p-1}},\quad\forall x\in \Omega \setminus \{y_{0}\}; 
\end{equation}
Moreover, $v_{p}$ verifies
\begin{equation}
\label{limgrad}
\lim_{|x| \to \infty} \displaystyle \frac{F(\nabla v_{p})}{v_{p}}=0.
\end{equation}
\end{prop}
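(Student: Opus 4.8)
The plan is to treat \eqref{p_change} as an anisotropic $p$-capacity problem on the exterior domain $\Omega$ and use the standard exterior-domain machinery for quasilinear operators in the sublinear-exponent range $1<p<n$. First I would establish existence: exhaust $\Omega$ by bounded sets $\Omega_R=\Omega\cap B_R$ and solve the Dirichlet problem for $\dive(F^{p-1}(\nabla v)F_\xi(\nabla v))=0$ in $\Omega_R$ with $v=1$ on $\partial\Omega$ and $v=0$ on $\partial B_R$; this is the Euler--Lagrange equation of the strictly convex functional $v\mapsto \int F^p(\nabla v)\,dx$, so a unique minimizer $v_{p,R}$ exists by the direct method, with $0\le v_{p,R}\le 1$ by truncation. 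The operator is the Finsler $p$-Laplacian, which is degenerate elliptic of $p$-Laplacian type because $D^2(\tfrac12 F^2)$ is positive definite away from $0$; hence De Giorgi--Nash--Moser and the Tolksdorf/DiBenedetto--Lieberman regularity theory give $v_{p,R}\in C^{1,\alpha}_{\rm loc}$, and $v_{p,R}\in C^\infty$ wherever $\nabla v_{p,R}\ne 0$ by elliptic regularity once the equation is uniformly elliptic there. Letting $R\to\infty$, monotonicity in $R$ (by comparison) and the $C^{1,\alpha}_{\rm loc}$ estimates give a limit $v_p$ solving \eqref{p_change} in $\Omega$; the barrier bounds below force $v_p>0$ and $v_p\to0$ at infinity when $1<p<n$, since the Wulff-shape fundamental solution $F^o(\cdot)^{-(n-p)/(p-1)}$ decays.

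Next I would prove the two-sided bound \eqref{th:confr} by the comparison principle. The key observation is that for the Finsler $p$-Laplacian the radial-type function $w(x)=\big(\rho/F^o(x-z)\big)^{(n-p)/(p-1)}$ is an exact solution of $\dive(F^{p-1}(\nabla w)F_\xi(\nabla w))=0$ on $\R^n\setminus\{z\}$: this follows from the identities recalled in Section 2 (in particular $F^o(x)F_\xi(F^o_x(x))=x$ and the homogeneity of $F,F^o$), exactly as $|x-z|^{-(n-p)/(p-1)}$ is $p$-harmonic in the Euclidean case. If $\mathcal W_r(x_0)\subset\Omega^c$, then on $\partial\Omega$ we have $F^o(x-x_0)\ge r$, so $w_{\rm low}(x)=\big(r/F^o(x-x_0)\big)^{(n-p)/(p-1)}\le1=v_p$ on $\partial\Omega$, while both tend to $0$ at infinity; the comparison principle for \eqref{p_change} on $\Omega$ (valid since the operator is monotone and the relevant boundary/infinity conditions are ordered) yields $w_{\rm low}\le v_p$. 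Symmetrically, if $\Omega^c\subset\mathcal W_s(y_0)$ then $F^o(x-y_0)\le s$ on $\Omega^c$, so $w_{\rm up}(x)=\big(s/F^o(x-y_0)\big)^{(n-p)/(p-1)}\ge1$ on $\Omega^c\supset\partial\Omega$ and $w_{\rm up}\ge v_p$ on $\Omega$ by comparison, giving the upper bound for all $x\in\Omega\setminus\{y_0\}$.

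Finally, for \eqref{limgrad} I would combine the decay of $v_p$ from \eqref{th:confr} with interior gradient estimates. Away from $\Omega^c$, pick $x$ with $|x|$ large and set $d=|x|/2$; on the ball $B_d(x)$ the function $v_p$ is a bounded nonnegative solution of the homogeneous equation, so the scale-invariant interior $C^{1,\alpha}$ estimate for the $p$-Laplacian-type operator (Tolksdorf, DiBenedetto, Lieberman) gives $\sup_{B_{d/2}(x)}F(\nabla v_p)\le C\,d^{-1}\sup_{B_d(x)}v_p$. By \eqref{th:confr}, $\sup_{B_d(x)}v_p\le C\,|x|^{-(n-p)/(p-1)}$ and $\inf_{B_{d/2}(x)}v_p\ge c\,|x|^{-(n-p)/(p-1)}$ (for $|x|$ large, using the explicit lower barrier and $F^o$-equivalence with $|\cdot|$), hence $F(\nabla v_p)/v_p\le C/|x|\to0$. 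The main obstacle is making the comparison principle in the exterior domain fully rigorous: one must justify that the ordering at the finite boundary together with the common decay at infinity is enough to conclude, which requires either a Phragm\'en--Lindel\"of-type argument or testing the weak formulation of the difference of the two equations against a suitable truncation of $(v_p-w)^+$ cut off near infinity, using $p<n$ so that the energy of the barrier is finite. Everything else is a routine transcription of the Euclidean $p$-harmonic exterior-capacity theory to the Finsler setting, the anisotropic identities of Section 2 replacing the Euclidean radial computations.
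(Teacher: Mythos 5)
Your proposal is correct and follows essentially the same route as the paper: the paper outsources existence, uniqueness, regularity and the barrier bounds \eqref{th:confr} to standard anisotropic capacity arguments (citing Bianchini--Ciraolo), and proves \eqref{limgrad} by combining a Caccioppoli inequality, the Harnack inequality and DiBenedetto's scale-invariant gradient estimate to arrive at $\|F(\nabla v_{p})\|_{L^{\infty}(B_{r/8}(x_{0}))}\le C r^{-1}\inf_{B_{r/2}(x_0)} v_p$. Your only deviation is cosmetic: for \eqref{limgrad} you replace the Harnack step by the explicit two-sided bound \eqref{th:confr}, which serves the same purpose of comparing the supremum and infimum of $v_p$ over concentric balls far from $\Omega^c$.
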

\begin{proof}
The proof of existence, uniqueness, regularity, as well as \eqref{th:confr}, follow by nowadays standard arguments; we refer the reader to \cite[Theorem 3.3]{bc} for the general anisotropic case we consider.

Finally we prove \eqref{limgrad}. We argue as in \cite{m}. Let $\eta \in C^{\infty}_{0}(\Omega)$ be a suitable cut-off function. Taking $\psi= v_{p}\eta^{p}$ as test function in the weak formulation for \eqref{p_change} and using the H\"older inequality, it easily follows that:
\[
\int_{\Omega} \eta^{p}F(\nabla v_{p})^{p}dx \le p^{p}\int_{\Omega}v_{p}^{p}F(\nabla \eta)^{p} dx.
\]
By Harnack inequality (see for instance \cite{t}) we get
\[
r^{p-n}\int_{B_{r/4}(x_0)} F(\nabla v_{p})^{p}dx \le C(n,p) \inf_{B_{r/2}(x_0)} v_p^p,
\] 
where $C(n,p)$ is a positive constant depending on $n$ and $p$. By applying the result contained in \cite{dib}, we have
\[
\|F(\nabla v_{p})\|_{L^{\infty}(B_{r/8}(x_{0}))} \le \frac{C(n,p)}{r} \inf_{B_{r/2}(x_0)} v_p,
\] 
which implies \eqref{limgrad}, and the proof is completed.
\end{proof}

It is direct to see that $$u_p=(1-p)\log v_p\in C^{1,\alpha}(\overline{\Omega})$$ solves \eqref{p_prob} (we refer the reader, e.g., also to \cite{dpgzaa} for problems involving equations as in \eqref{p_prob}). Next we show the gradient estimate in Theorem \ref{theoremapprox}, which is based on the following Lemma.
\begin{lem}\label{sperb}
Let $1<p<n$ and $u_p\in C^{1,\alpha}_{\textrm{loc}}(\overline\Omega)$ be a solution to \eqref{p_prob}, then 
\begin{equation}
\label{maxgrad}
\sup_{\bar\Omega}F(\nabla u_p)=\sup_{\de\Omega} F(\nabla u_p).
\end{equation}
\end{lem}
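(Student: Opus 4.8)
The statement to prove is that for a solution $u_p \in C^{1,\alpha}_{\textrm{loc}}(\overline\Omega)$ of the Finsler-$p$-Laplacian problem \eqref{p_prob}, the supremum of $F(\nabla u_p)$ over $\overline\Omega$ is attained on the boundary $\partial\Omega$. This is a classical-style gradient maximum principle for the $p$-Laplacian type operator, adapted to the anisotropic setting, so the plan is to derive a differential inequality (of the form ``an elliptic operator applied to $F(\nabla u_p)^2$ is $\geq$ something involving lower order terms'') and then invoke the strong maximum principle. Since $u_p = (1-p)\log v_p$ and $v_p$ solves the homogeneous Finsler-$p$-Laplace equation $\dive(F^{p-1}(\nabla v_p)F_\xi(\nabla v_p))=0$, it may actually be cleanest to work with the equation for $u_p$ directly: expand $\dive(F^{p-1}(\nabla u_p)F_\xi(\nabla u_p)) = F(\nabla u_p)^p$ using the $1$-homogeneity relations among $F$, $F_\xi$, $F_{\xi\xi}$ recalled in Section 2.

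First I would fix the interior: on the open set $\{\nabla u_p \neq 0\}$ the operator is locally uniformly elliptic and $u_p$ is smooth (by $C^\infty$ regularity away from critical points, as in Proposition \ref{arm}), so Bochner-type differentiation is legitimate there. I would set $w = \frac12 F(\nabla u_p)^2$ and differentiate the PDE in the direction $e_k$, then contract appropriately (multiplying by $F_{\xi_k}(\nabla u_p)$ or by $\nabla_k u_p$, as is standard for such gradient estimates), to obtain a linear second-order elliptic inequality $a^{ij}\nabla^2_{ij} w + b^i \nabla_i w \geq c\, w$ with $c \geq 0$ — the sign of the zeroth-order coefficient coming from the fact that the right-hand side $F(\nabla u_p)^p$ is an increasing function of $w$, so differentiating it produces a favorable term. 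The anisotropic ingredients needed are: $\langle F_\xi(\xi),\xi\rangle = F(\xi)$, $F_{\xi\xi}(\xi)\xi = 0$ (Euler relation for the $0$-homogeneous map $F_\xi$), and positive-definiteness of $D^2(\tfrac12 F^2)$ on $\mathbb R^n\setminus\{0\}$, which is exactly hypothesis (ii) guaranteeing uniform ellipticity on compact subsets of $\{\nabla u_p \neq 0\}$.

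By the strong maximum principle applied to this inequality on any connected component of $\{\nabla u_p \neq 0\}$, $w$ cannot attain an interior maximum unless it is constant there; hence the maximum of $w$ over the closure of such a component is attained on its boundary, which consists of points of $\partial\Omega$ together with critical points of $u_p$. At a critical point $F(\nabla u_p) = 0$, which is the global minimum value of $F(\nabla u_p)$, so unless $u_p$ is constant (impossible, since $u_p\to\infty$ and $u_p=0$ on $\Omega^c$) the maximum of $F(\nabla u_p)$ over all of $\overline\Omega$ must be realized at a boundary point of $\Omega$; combined with the trivial inequality $\sup_{\partial\Omega} F(\nabla u_p) \leq \sup_{\overline\Omega} F(\nabla u_p)$ this gives \eqref{maxgrad}.

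The main obstacle I anticipate is handling the critical set $\{\nabla u_p = 0\}$ rigorously: the operator degenerates there and $w$ is only $C^{0,\alpha'}$ a priori, so one cannot naively apply the maximum principle across it. The standard fix — and what I would carry out — is to note that $w \geq 0$ with $w = 0$ precisely on the critical set, so critical points are automatically minima of $w$ and pose no threat to a maximum; alternatively one works on the open set $\{F(\nabla u_p) > \delta\}$ for $\delta$ a regular value, shows the max of $w$ there is on $\partial\{F(\nabla u_p)>\delta\}$, and lets $\delta \to 0^+$ using that $F(\nabla u_p)$ is continuous on $\overline\Omega$. A secondary technical point is that $\Omega$ is unbounded, so one must also rule out the supremum ``escaping to infinity''; this follows from \eqref{limgrad} in Proposition \ref{arm} (equivalently $F(\nabla u_p)\to 0$ at infinity, since $u_p = (1-p)\log v_p$ and $F(\nabla u_p)/1 \sim (p-1) F(\nabla v_p)/v_p \to 0$), so the relevant sup is in fact attained on a compact region and the argument above applies.
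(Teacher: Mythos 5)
Your proposal follows essentially the same route as the paper: the paper sets $G=\tfrac12F^2(\nabla u_p)$, differentiates the equation, and derives $\dive\left[G^{\frac p2-1}A\nabla_x G\right]-\tfrac p2G^{\frac p2-1}G_\xi\cdot\nabla_xG\ge0$ with $A=G_{\xi\xi}+\tfrac{p-2}{2}\tfrac{G_\xi\otimes G_\xi}{G}$ uniformly elliptic, then applies the maximum principle on the superlevel set $\Omega_\beta=\{F(\nabla u_p)>\beta\}$ for $\beta>\sup_{\partial\Omega}F(\nabla u_p)$, which by \eqref{limgrad} is bounded, avoids $\partial\Omega$, and contains no critical points --- exactly your ``work on $\{F(\nabla u_p)>\delta\}$'' fix for the degeneracy and the escape to infinity. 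The one ingredient your plan does not name is the anisotropic Kato-type inequality $G\,G_{il}G_{jk}u_{ij}u_{kl}\ge\tfrac12G_{il}G_jG_ku_{ij}u_{kl}$ (Lemma 2.2 of \cite{wx-pac}), which is what makes the quadratic second-order terms nonnegative after the contraction; otherwise the argument is the paper's.
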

\begin{proof}
We omit the subscript $p$ in $u_p$ in the proof. Let $\tau= \sup_{\de\Omega} F(\nabla u)$. We consider the following set 
\[
\Omega_{\beta}=\left\{x\in\Omega\colon F(\nabla u)> \beta\right\},
\] 
with $\beta>\tau\ge 0$. From \eqref{limgrad} we see $F(\nabla u)$ vanishes at infinity by \eqref{limgrad}, then $\Omega_{\beta}$ is a bounded, open set such that $\overline\Omega_{\beta}\cap \de\Omega=\emptyset$ and $F(\nabla u)=\beta$ on $\de\Omega_{\beta}$. 

In order to prove \eqref{maxgrad}, we will prove that $\Omega_{\beta}=\emptyset$.

Note that in $\overline{\O_\b}$, $\nabla u\neq 0$ and hence $u\in C^\infty(\overline{\O_\b})$.
By writing $$G(\xi)=\frac{1}{2}F^2(\xi),$$ the equation in \eqref{p_prob} becomes
\begin{align}\label{eq-G}
\dive\left( G^{\frac p 2-1}(\nabla u) G_\xi(\nabla u) \right) =G^{\frac p2}(\nabla u).
\end{align}
 Hereafter we will adopt the Einstein convention on the repeated indices, and use the notations
\begin{align*}
&G=G(\nabla u),\quad G_{i}=G_{\xi_{i}}(\nabla u),\quad G_{ij}=G_{\xi_{i}\xi_{j}}(\nabla u),\\&\quad u_{i}=u_{x_{i}},\quad u_{ij}=u_{x_{i}x_{j}}, \cdots
\end{align*}
Differentiating \eqref{eq-G} with respect to ${x_{i}}$, we get
\[
\de_{x_{k}}\left(\de_{x_{i}}[G^{\frac p2-1} G_{k}]\right)=\frac p2 G^{\frac p2 -1}G_{j}u_{ij},
\]
and
\[
\de_{x_{k}}\left(G_{i}\de_{x_{i}}[G^{\frac p2-1} G_{k}]\right)=\frac p2 G^{\frac p2 -1}G_{i}G_{j}u_{ij}+G_{il}u_{lk}\de_{x_{i}}[G^{\frac p2-1} G_{k}].
\]
Then
\begin{multline*}
\de_{x_{k}}\left(\left[\frac{p-2}{2}G^{\frac p2-2}G_{j} G_{k}+G^{\frac p2 -1}G_{kj}\right]G_{i}u_{ij}\right)=\\=\frac p2 G^{\frac p2 -1}G_{i}G_{j}u_{ij}+\left[\frac{p-2}{2}G^{\frac p2-2}G_{j} G_{k}+G^{\frac p2 -1}G_{kj}\right]G_{il}u_{lk}u_{ij};
\end{multline*}
hence
\begin{multline}
\label{eq3}
\de_{x_{k}}\left(\left[\frac{p-2}{2}G^{\frac p2-2}G_{i}G_{j} G_{k}+G^{\frac p2 -1}G_{i}G_{kj}\right]u_{ij}\right)=\\=\frac p2 G^{\frac p2 -1}G_{i}G_{j}u_{ij}+\left[\frac{p-2}{2}G^{\frac p2-2}G_{j} G_{k}G_{il}u_{lk}u_{ij}+G^{\frac p2 -1}G_{kj}G_{il}u_{lk}u_{ij}\right]=
\\=
\frac p2 G^{\frac p2 -1}G_{i}G_{j}u_{ij}+\frac{p-1}{2}G^{\frac p2-2}G_{j} G_{k}G_{il}u_{lk}u_{ij} +\\+
G^{\frac p2-2}\left[-\frac{1}{2}G_{j} G_{k}G_{il}u_{lk}u_{ij}+G\,G_{kj}G_{il}u_{lk}u_{ij}\right].
\end{multline}
The Kato type inequality (see \cite[Lemma 2.2]{wx-pac}) implies that
\begin{equation*}
G G_{il}G_{jk}u_{ij}u_{kl}\ge \frac{1}2 G_{il}G_j G_k u_{ij}u_{kl}.
\end{equation*}
Hence, from \eqref{eq3} we get
\begin{multline*}
\de_{x_{k}}\left(\left[\frac{p-2}{2}G^{\frac p2-2}G_{i}G_{j} G_{k}+G^{\frac p2 -1}G_{i}G_{kj}\right]u_{ij}\right)\ge \\ \ge
\frac p2 G^{\frac p2 -1}G_{i}G_{j}u_{ij}+\frac{p-1}{2}G^{\frac p2-2}G_{j} G_{k}G_{il}u_{lk}u_{ij}.
\end{multline*}
The above inequality can be read as
\begin{multline*}
\dive\left[ G^{\frac p2-1}\left(G_{\xi\xi} \nabla_x G +\frac{p-2}{2} \frac{(G_\xi\cdot \nabla_x G)}{G}G_\xi \right) \right]
-\frac{p}{2}G^{\frac p2-1}( G_\xi\cdot\nabla_x G) \ge \\ \ge \frac{p-1}{2}( G_{\xi\xi} \nabla_x G)\cdot\nabla_x G,
\end{multline*}
that is
\begin{equation}
\label{equat}
\dive \left[ G^{\frac p2-1} A \nabla_x G \right]-\frac{p}{2} G^{\frac p2-1} G_\xi \cdot \nabla_x G \ge \frac{p-1}{2} G^{\frac{p}{2}-2} \left(G_{\xi\xi}\nabla_x G\right) \cdot \nabla_x G \ge 0,
\end{equation}
where 
\[
A=G_{\xi\xi}+\frac{p-2}{2}\frac{G_\xi\otimes G_\xi}{G}
\]
is a uniformly positive definite matrix. Hence, the functional in the left-hand side of \eqref{equat} can be seen as a linear elliptic operator acting on $G(\nabla u)$, and by the maximum principle we have that
\[
\sup_{\Omega_{\beta}}G(\nabla u) \le \sup_{\de \Omega_{\beta}}G(\nabla u) = \frac {\beta^{2}}{2}.
\]
This implies that $\Omega_{\beta}=\left\{x\in\Omega\colon G(\nabla u)>\frac{\beta^{2}}{2}\right\}$ is empty, and the proof is completed.
\end{proof}

\begin{proof}[Proof of Theorem \ref{theoremapprox}]
We are remained to prove the boundary gradient estimate \eqref{estcurv2}. The global gradient estimate \eqref{estcurv1} follows from Lemma \ref{sperb} and \eqref{estcurv2}.

Let $x\in \de \Omega$ such that $\mathcal W_r(x_0) \subset \Omega^c$. 
Since $u_p=0$ on $\de \O$, hence if $\nabla u_p(x)\neq 0$, then $\nu_F(x)=F_\xi(\nabla u_p(x))$ and $F(\nabla u_p(x))=\frac{\de u_p}{\de \nu_F}(x)$.
On the other hand, since  $W_r(x_0)$ and $\de \O$  are tangent at $x$, we see $x-x_0= r\nu_F(x)$.
It follows that 
\begin{eqnarray*}
F(\nabla u_p(x))&=&\frac{\de u_p}{\de \nu_F}(x)=\lim_{t\to 0}\frac{u_p(x+t\nu_F)}{t}\\&\leq &(n-p) 
\lim_{t\to 0}\frac{\log F^0(x+t\nu_F-x_0)-\log r}{t}
\\&=&\frac{n-p}{r}.
\end{eqnarray*}
Thus if we define 
\begin{equation}
\label{R}
R:=\sup\{r>0 \colon \forall x \in \de \Omega, \exists \mathcal W_r(x_0) \subset \Omega^c \text{ such that } x \in \de \mathcal W_r(x_0)\}.
\end{equation}
then \begin{equation*}\|F(\nabla u_{p})\|_{L^{\infty}(\de \Omega)}\le \frac{n-p}{R}.\end{equation*}
It follows from Lemma \ref{sperb} that
\begin{equation}
\label{estapp}
\|F(\nabla u_{p})\|_{L^{\infty}(\overline\Omega)}\le \frac{n-p}{R}.
\end{equation}

 
Next prove the estimate \eqref{estcurv2}. We argue as in \cite{HI1,kn}.
Let  $\eps>0$. Choose $\bar w\in C^\infty(\overline{\O})$ such that
\begin{itemize}
\item [i)]$
\bar w=0 \text{ on }  \de \Omega$ and $\bar w>0$ in $\Omega$;
\item [ii)]$
H_F^+ < F(\nabla\bar w)\le H_F^++\eps \text{ on } \de \Omega.$
\end{itemize}
Denote
\begin{equation}
\label{Q_p}
\mathcal Q_p[\varphi]:=\dive\left( F^{p-1}(\nabla \varphi)F_{\xi}(\nabla \varphi) \right) - F(\nabla \varphi)^{p}.
\end{equation}

Since $F(\nabla\bar w) >0$ and $\bar w=0$ on $\de \O$, the anisotropic mean curvature of $\de \O$ is given by
\begin{equation*}
H_F(x)= \dive \left(F_\xi(\nabla \bar w)\right). \end{equation*}
Thus
\[
\mathcal Q_1[\bar w](x)= H_F(x)-F(\nabla \bar w(x)) <0 \hbox{ for }  x\in \de \O. 
\]
 Let $\delta >0$ and denote by $U_{\delta}$ the components of the set $\{0 \le \bar w < \delta \} $ containing $\de \Omega$. If we choose $\delta>0$ small enough, 
 we may have $F(\nabla\bar w) >0$ and $\mathcal Q_1[\bar w]<0$ in $U_\delta$.
 
 Define $w\in C^\infty(U_\delta)$ by \[
w=\displaystyle\frac{\bar w}{1-\frac{\bar w}{\delta}}.
\]
Then \[
\nabla w=\frac{\nabla \bar w}{(1-\frac{\bar w}{\delta})^2}.
\]
A simple computation gives
\[
\mathcal Q_1[ w]= \mathcal Q_1[\bar w]+ \left( 1-\frac{1}{\left( 1-\frac{\bar w}{\delta}\right)^2}\right) F(\nabla \bar w) <0  \quad \text{ on } U_{\delta}.
\]

By \eqref{estapp}, there exists a constant $C=C(\delta)>0$, such that $u_{p}\le C$ in $U_{\delta}$. Denoted by $\tilde{U}_{C+1} $  the component of the set $\{0 \le w \le C+1\}$ in $U_{\delta}$. Since $u_p=w=0$ on $\de \O$, we see 
\begin{equation}
\label{cfront}
u_{p} \le w  \quad  \text{on}\;  \de \tilde{U}_{C+1}.
\end{equation}

In order to compare $u_{p}$ and $w$ in $\overline{\tilde{U}}_{C+1}$, we compute 
\[
\mathcal Q_p[ w]= F^{p-1}(\nabla w) \displaystyle \left( \mathcal Q_1[ w]+ (p-1) \frac{ w_{ik}F_{\xi_i}(\nabla  w) F_{\xi_k}(\nabla  w)}{F(\nabla w)} \right) \text{ in }\tilde{U}_{C+1}.
\]
Since $ \mathcal Q_1[ w]<0$  in $\tilde{U}_{C+1}$, one may choose $p-1$ small enough, depending on $\|\bar w\|_{C^2(\tilde{U}_{C+1})}$, $\inf_{\tilde{U}_{C+1}} F(\nabla \bar w)$ and $C$, such that \[
\mathcal Q_p[ w]<0\hbox{ in } \tilde{U}_{C+1}.
\]
 Then since $\mathcal Q_p[u_p]=0>\mathcal Q_p[ w]$ in $\tilde{U}_{C+1}$ and \eqref{cfront}, by the comparison principle, we have \[
 u_{p} \le w\hbox{ on }\overline{\tilde{U}}_{C+1}.\] 
It follows that
\[
F(\nabla u_{p})= \displaystyle \frac{\de u_{p}}{\de \nu_F}   \le  \displaystyle \frac{\de w}{\de \nu_F}=\displaystyle \frac{\de \bar w}{\de \nu_F} =F(\nabla \bar w) \le H_F^++\eps \quad \text{on }\de \Omega.
\]
The proof of Theorem \ref{theoremapprox} is completed.
\end{proof}
Now we are ready to prove Theorem \ref{thm-existence}.
\begin{proof}[Proof of the Theorem \ref{thm-existence}]
Let $u_{p}$ be the solution of \eqref{p_prob} in Theorem \ref{theoremapprox}.
Then for any precompact set $K\subset \Omega$, $u_{p}$ is also a minimizer of the functional
\begin{equation}
\label{funcjp}
J^{p}_{w}(\varphi)=\int_{K}\left[\frac{1}{p}F(\nabla \varphi)^{p}+\varphi F(\nabla w)^{p}\right]dx,
\end{equation}
in the sense that
\begin{equation}
\label{defsol}
J^{p}_{u_{p}}(u_{p}) \le J^{p}_{u_{p}}(\varphi),\quad \forall \varphi\in W^{1,p}_{\textrm{loc}}(\Omega)\text{ such that }\varphi=u_{p}\text{ in }\Omega\setminus K. 
\end{equation}
Indeed, being $u_{p}-\varphi=0$ outside $K$, and using it a as test function for \eqref{p_prob}, we get that
\begin{align*}
\int_{K} F^{p}(\nabla u_{p})(u_{p}-\varphi)dx&=\int_{K} F^{p-1}(\nabla u_{p})F_{\xi}(\nabla u_{p})\cdot\nabla (\varphi-u_{p})dx
\\&\leq \frac{1}{p}\int_K( F(\nabla \varphi)^p- F(\nabla u_p)^p) dx.
\end{align*}
The inequality above follows from the convexity of $F(\xi)^p$.

On the other hand, from \eqref{th:confr},  we know $u_p$ has uniform upper bound on any compact set in $\O$.
Since we also have uniform global gradient estimate \eqref{estcurv1} for $\nabla u_{p}$, by Arzela-Ascoli's theorem, we get that, there exists a subsequence $p_{k}\to 1^{+}$ and $u\in C^{0,1}_{loc}(\overline{\O})$ such that
\begin{equation}
\label{strongconvunif}
u_{p_{k}}\to u\text{  uniformly in any compact sets of }\overline\Omega.
\end{equation}  
If we can show $u$ is a weak solution of  \eqref{prob1}, then
since the weak solution of \eqref{prob1} is unique, we will get $u_p\to u$ uniformly convergence  in any compact sets of $\overline\Omega$ as $p\to 1$.

Next we show that $u$ is a proper weak solution to problem \eqref{prob1}, in the sense of Definition \ref{defweak}.
To this aim, we need to prove that 
\begin{equation}
\label{strongconvgrad}
\left|\nabla u_{p_{k}}\right|^{p_{k}} \to |\nabla u| \quad \text{in }L_{\textit{loc}}^{1}(\Omega)\text{ for a subsequence }p_k\to 1^{+}.
\end{equation}
Indeed from \eqref{strongconvunif} and \eqref{strongconvgrad} we can pass to the limit in \eqref{funcjp} obtaining that
\[
J^{K}_{u_{p_{k}}}(\varphi,p_{k})  \to J^{K}_{u}(\varphi) \quad \text{ and } \quad J^{K}_{u_{p_{k}}}(u_{p_{k}},p_{k})  \to J^{K}_{u}(u)
\] 
In order to prove \eqref{strongconvgrad}, we argue as in \cite{m,HI1}.
Let $K \subset \Omega$ be a precompact set and consider  the following test function $\psi$ in \eqref{defsol} 
\[
\psi= \eta \varphi + (1-\eta)u_{p},
\]
where $\eta \in C^{\infty}(\Omega)$ is a cut-off function such that  $0 \le \eta \le 1$, $\eta\equiv 1  $ in $K$, and $\varphi\in C^{0,1}_{\textrm{loc}}(\Omega)$. Then we get
\begin{multline*}
\int_{\supp\eta} \left(\frac{F^{p}(\nabla u_{p})}{p} +\eta\left(u_{p}-\varphi\right)F^{p}(\nabla u_{p})\right)dx \le \\ \le
\frac{1}{p}\int_{\supp\eta} F^{p}\left(\nabla (\eta\varphi+(1-\eta)u_{p})\right)dx \le \\ \le \frac{1}{p}\int_{\supp\eta} 
\left[(\varphi-u_{p})F(\nabla\eta)+\eta F(\nabla \varphi)+(1-\eta)F(\nabla u_{p})\right]^{p}dx \le\\ \le 
 \frac{3^{p-1}}{p}\int_{\supp\eta} 
\left[|\varphi-u_{p}|^{p}F^{p}(\nabla\eta)+\eta^{p} F^{p}(\nabla \varphi)+(1-\eta)^{p}F^{p}(\nabla u_{p})\right]dx.
\end{multline*}
Choosing $\varphi=u$, and letting $p_k\to 1^{+}$, we obtain
\[
\limsup_{p_{k}\to 1^{+}} \int_{\Omega} \eta F(\nabla u_{p_{k}})^{p_{k}}dx \le \int_{\Omega} \eta F(\nabla u)\,dx,
\] 
which together with Fatou's Lemma gives \eqref{strongconvgrad}.

 The properness of $u$ follows directly from \eqref{th:confr}.
The estimate follows from that in Theorem \ref{theoremapprox}.
The proof of  Theorem \ref{thm-existence} is completed. 
\end{proof}

\end{document}